\let\mathcal\mathscr
\def\Alb{\mathop{\rm Alb}\nolimits}
\def\dim{\mathop{\rm dim}\nolimits}
\def\rank{\mathop{\rm rank}\nolimits}
\def\Aut{\mathop{\rm Aut}\nolimits}
\def\sym{\mathbf{S}}
\def\Sym{\mathbf{S}}
\def\llra{\hbox to 12mm{\tofill}}
\def\Sp{\mathop{\rm Sp}\nolimits}
\def\fI{\mathop{\rm I}\nolimits}
\def\Hdg{\mathop{\rm Hdg}\nolimits}
\def\s{{\sigma}}
\def\min{\mathop{\rm min}\nolimits}
\def\cI{{\mathcal I}}
\def\cJ{{\mathcal J}}
\def\cK{{\mathcal K}}
\def\cO{{\mathcal O}}
\def\cZ{{\mathcal Z}}
\def\1Y{{Y'}}
\newenvironment{proofth}{\trivlist \item[\hskip\labelsep{\sc
Proof of the Proposition \ref{0.3}.}]\rm}{\hbox
to.1pt{\hss}\hfill$\square$\bigskip\endtrivlist}
\newtheorem{theo}{Theorem}[section]
\newtheorem{prop}[theo]{Proposition}
\newtheorem{lemm}[theo]{Lemma}
\newtheorem{coro}[theo]{Corollary}
\newtheorem{rema}[theo]{Remark}
\newtheorem{exam}[theo]{Example}
\begin{document}
\title[A Noether-Lefschetz theorem]
{A Noether-Lefschetz theorem for varieties of $r$-planes
in complete intersections}
\date{\today}
\author[Z. Jiang]{Zhi Jiang}
\address{Max-Planck-Institut f\"{u}r Mathematik\\Vivatsgasse 7\\53111, Bonn}
\email{{\texttt flipz@mpim-bonn.mpg.de}}

\maketitle
Let $X$ be a general complete intersection in complex projective space. The Picard number of $X$ is known. We may state it in the following form.

\medskip
\noindent{\bf Theorem 1} {\em Let $X$ be a smooth complete intersection of dimension $\geq 2$ in complex projective space.
\begin{itemize}
\item if $\dim X\geq 3$, the second Betti number of $X$ is $1$, and in particular, the Picard number $\rho(X)$ of $X$ is 1;
\item if $\dim X=2$ and $X$ is very general, the Picard number $\rho(X)$ is $1$, except when $X$ is a quadric surface in $\mathbf{P}^3$, or $X$ is a cubic surface in $\mathbf{P}^3$, or $X$ is a complete intersection of two quadrics in $\mathbf{P}^4$.
\end{itemize}}
\medskip

The first part of the above theorem comes from the Lefschetz hyperplane theorem and the second part is the so-called Noether-Lefschetz theorem (see \cite[section 3]{V}, or \cite{Sp}, section 2). In \cite{BV}, Bonavero and Voisin, using Deligne's global invariant cycles theorem, proved an analogue of the first part of the above theorem for the schemes parametrizing $r$-planes contained in a complete intersection in complex projective space. Debarre and Manivel later used Bott's theorem to give another proof of the same theorem in \cite{DM}. We will first recall their theorem.

We follow the presentation in \cite{DM}. Let $V$ be a complex vector space of dimension $n+1$. For a finite sequence $\underline{d}=(d_1,\ldots, d_s)$ of integers $\geq 2$ and a positive integers $r$, we set $|\underline{d}|=\sum_{i=1}^sd_i$, $\underline{d}+r=(d_1+r,\ldots, d_s+r)$, and $\binom{\underline{d}}{r}=\sum_{i=1}^s\binom{d_i}{r}$. We then set $$\delta(n,\underline{d}, r)=(r+1)(n-r)-\binom{\underline{d}+r}{r},$$ and $\delta_{-}(n,\underline{d},r)=\min\{\delta(n,\underline{d},r), n-2r-s\}$.

Let $X\subset \mathbf{P}(V)$ be a complete intersection defined by $f_{1}=\cdots=f_s=0$, where $f_i\in \Sym^{d_i}V^*$ for each $1\leq i\leq s$.  We denote by $F_r(X)$ the subscheme of $G:=G(r+1, V)$ parametrizing the linear spaces of dimension $r$ contained in $X$. On $G(r+1, V)$, there is the tautological sub-bundle $\Sigma$ of $V\otimes\cO_G$ of rank $r+1$ and the tautological quotient bundle $Q$ of rank $n-r$, and $\Omega_{G}^1\simeq \Sigma\otimes Q^*$.  Each $f_i$ induces a global section $\sigma_i$ of $\Sym^{d_i}\Sigma^*$ on $G$. We can also see $F_r(X)$ as the zero locus of the global sections $\sigma_i$ for all $1\leq i\leq s$.

Debarre and Manivel in \cite{DM} (see also Proposition 4 and Proposition 5 in \cite{BV}) proved the following theorem.

\medskip
\noindent{\bf Theorem 2}\label{thmDM} {\em Assume $X$ is a general complete intersection as above. If $\delta_{-}(n,\underline{d},r)\geq 1$, the scheme $F_{r}(X)$ is connected, smooth and of dimension $\delta(n,\underline{d}, r)$. Furthermore, if $\delta_{-}\geq 3$, the second Betti number of $F_{r}(X)$ is $1$ and in particular, the Picard number $\rho(F_r(X))$ of $F_r(X)$ is 1.}
\medskip

 The above theorem is optimal. Indeed, when $\delta_{-}(n,\underline{d},r)\leq 2$, the Hodge number $h^{2,0}(F_r(X))$ is often non-zero. Hence though we know the second Betti number of $F_r(X)$,  it is a priori not clear what the Picard number is.

 The main theorem of this paper is the following:

\medskip
\noindent{\bf Theorem 3}\label{thm1} {\em Let $X$ be a very general complete intersection in projective space. Assume that $\delta(n, \underline{d}, r)\geq 2$ and $\delta_{-}(n, \underline{d}, r)>0$.\footnote{This assumption is not important. We only use it to exclude the case when $X$ is a quadric in $\mathbf{P}^{2r+1}$ for $r\geq 1$, where $F_r(X)$ has two smooth isomorphic disjoint components and the Picard number of each component is $1$.} We have $\rho(F_r(X))=1$ except in the following cases:
\begin{itemize}
\item $X$ is a quadric in $\mathbf{P}^{2r+3}$, $r\geq 1$, where  the Picard number of $F_r(X)$ is $2$;
\item $X$ is a complete intersection of two quadrics in $\mathbf{P}^{2r+4}$, $r\geq 1$, where the Picard number of $F_r(X)$ is $2r+6$.
\end{itemize}. }

In the following examples, we will see that when $\delta_{-}(n, \underline{d}, r)\leq 2$, the Picard number $\rho(F_r(X))$ varies, where the situation is similar to the families of smooth hypersurfaces of degree $\geq 4$ in $\mathbf{P}^3$.

\begin{exam}\label{exam1}\upshape
 Let $X\hookrightarrow \mathbf{P}^{2r+3}$ be a general complete intersection of two quadrics, where $r\geq 1$. Considering
 $F_r(X)$ and in this case we have $\delta(2r+3,\underline{d},r)=r+1$ and $\delta_{-}(2r+3,\underline{d},r)=1$.

We may assume that the two quadrics defining $X$ are \begin{equation*}\sum_{i}x_i^2\;\; \textrm{and}\;\sum_i\lambda_ix_i^2\end{equation*} where $\lambda_i\neq \lambda_j$ if $i\neq j$. By \cite{Re}, we know that $F_r(X)$ is an abelian variety and is isomorphic to the Jacobian of the hyperelliptic curve defined by $$y^2=\Pi_i(x_0-\lambda_ix_1).$$ Therefore by \cite{P}, $\rho(F_r(X))=1$ for a very general $X$.
There also exists smooth $X$ such that $\rho(F_r(X))\geq 2$, for instance, when the hyperelliptic curve is defined by $y^2=x^{2r+4}-1$.
\end{exam}

\begin{exam}\label{exam2}\upshape [See \cite{CG}] Let $X\hookrightarrow \mathbf{P}^4$ be a cubic threefold. We consider $F_1(X)$ and have in this case $\delta(4, 3, 1)=2$ and $\delta_{-}(4, 3, 1)=1$.

Clemens and Griffiths proved that the Abel-Jacobi map $\alpha: F_1(X)\rightarrow JX$ induces an isomorphism $\alpha_*: \Alb(F_1(X))\simeq JX$ and $\wedge^2 H^1(F_1(X), \mathbf{Q})\simeq H^2(F_1(X), \mathbf{Q}).$ We conclude that $$H^2(F_1(X), \mathbf{Q})\simeq H^2(JX, \mathbf{Q})\simeq \wedge^2H^3(X, \mathbf{Q}).$$

Since the Zariski closure of the monodromy group for the family of cubic threefolds is the whole symplectic group (\cite{PS}, Theorem 10.22), we have $\rho(F_1(X))=1$ for $X$ very general. We see in \cite{Ro} that there is a $7$-dimensional family in the $10$-dimensional moduli space of cubic threefolds parametrizing $X$ whose associated $F_1(X)$ contains elliptic curves. For such $X$, we have $\rho(F_1(X))\geq 2$.
\end{exam}

\begin{exam}\label{exam3}\upshape Let $X\hookrightarrow \mathbf{P}^7$ be a cubic $6$-fold. We consider $F_2(X)$ and have $\delta(7, 3, 2)=5$ and $\delta_{-}(7, 3, 2)=2$.

The $2$-planes cover the whole of $X$, hence the Abel-Jacobi map induces an injective map
$H^6(X, \mathbf{Q})_{\textrm{prim}}\rightarrow H^2(F_2(X), \mathbf{Q})$. We can also compute that $$\dim H^6(X, \mathbf{Q}) =\dim H^2(F_2(X), \mathbf{Q})=87.$$ Hence since the monodromy group of the family of cubic of dimension $6$ is again big (\cite{PS}, Theorem 10.22), if $X$ is general, $\rho(F_2(X))=\textrm{rank}(H^{3,3}(X)\cap H^6(X, \mathbf{Q}))=1$. If $X$ contains some special codimension-$3$ subvariety, e.g. $\mathbf{P}^3$, we have $\rho(F_2(X))=\textrm{rank}(H^{3,3}(X)\cap H^6(X, \mathbf{Q}))\geq 2$.
\end{exam}

We will give an application of our Noether-Lefschetz type theorem.

For any smooth cubic fivefold $Z$, we denote by $(JZ, \Theta)$ the principally polarized intermediate Jacobian of $Z$ which is a $21$-dimensional principally polarized abelian variety. We then have the Abel-Jacobi map $$\alpha: F_2(Z)\rightarrow JZ.$$ As an application of our main theorem, we have the following:

\medskip
\noindent{\bf Theorem 4}  {\em  With the notations as above, the cohomology class $$[\alpha_*(F_2(Z))]=12[\frac{\Theta^{19}}{19!}].$$
}

\medskip

{\small\noindent{\bf Acknowledgements.}
We are indebted to Olivier Debarre, Laurent Manivel, and Claire Voisin for helpful discussions. We also thank Fr\'{e}d\'{e}ric Han for help with the Program Lie.}

\medskip
\section{Preliminaries}\label{section1}
We shall use variations of Hodge structures to prove the main theorem. In this section, we will first recall a lemma which reduces the proof to the surjectivity of some maps between cohomology groups and then we will recall Bott's theorem which helps us to calculate the cohomology groups of homogeneous vector bundles on Grassmannians.

Let $Y$ be a smooth projective variety of dimension $N$ and Picard number $1$. Let $W$ be a vector bundle of rank $R$ on $Y$ which is globally generated with $N-R\geq 2$. For a section $\sigma$ of $W$, we denote by $X_{\sigma}$ the zero locus of $s$.
\begin{lemm}\label{hodge}We fix a general section $\sigma$ of $W$. Assume that $X_{\sigma}$ is smooth and of the expected dimension and assume that we have
\begin{itemize}
\item[a)] $\dim H^1(X_{\sigma}, \Omega_{Y}^1|_{X_{\sigma}})=1$;
\item[b)] $H^{N-R-2}(X_{\sigma}, K_{X_{\sigma}})\otimes H^0(Y, W)|_{X_{\sigma}}\rightarrow H^{N-R-2}(X_{\sigma}, W\otimes K_{X_{\sigma}})$ is surjective.
\end{itemize}
Then for a very general section $\rho\in H^0(Y, W)$, the Picard number of the zero locus $X_{\rho}$ is $1$.
\end{lemm}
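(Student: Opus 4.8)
The plan is to run the standard infinitesimal variation of Hodge structure argument for Noether-Lefschetz statements. Write $n:=N-R=\dim X_\sigma\ge 2$, let $B^\circ\subseteq\mathbf{P}(H^0(Y,W))$ be the dense Zariski-open locus of sections whose zero locus is smooth of expected dimension $n$ and for which conditions (a), (b) hold, and let $\pi:\cX\to B^\circ$ be the universal family with its weight-$2$ variation of Hodge structure $\cH=R^2\pi_*\mathbf{C}$ and Hodge filtration $F^2\subset F^1\subset F^0=\cH$. Since $Y$ has Picard number $1$ with ample generator $L$, for $b\in B^\circ$ the class $h:=c_1(L)|_{X_b}$ is a nonzero element of $H^{1,1}(X_b)=H^1(X_b,\Omega^1_{X_b})$ lying in the image of the natural map $j:H^1(X_b,\Omega^1_Y|_{X_b})\to H^1(X_b,\Omega^1_{X_b})$, because the restriction $H^1(Y,\Omega^1_Y)\to H^1(X_b,\Omega^1_{X_b})$ factors through $j$; by hypothesis (a) the source of $j$ is one-dimensional, so $\operatorname{Im}(j)=\mathbf{C}h$, and the conormal sequence $0\to W^*|_{X_b}\to\Omega^1_Y|_{X_b}\to\Omega^1_{X_b}\to 0$ then gives $\ker(\partial_1)=\operatorname{Im}(j)=\mathbf{C}h$, where $\partial_1:H^1(X_b,\Omega^1_{X_b})\to H^2(X_b,W^*|_{X_b})$ is the connecting homomorphism. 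Hence a rational class $\mu\in H^{1,1}(X_b)\cap H^2(X_b,\mathbf{Q})$ is a rational multiple of $h$ if and only if $\partial_1(\mu)=0$. Since $\rho(X_b)=\dim\bigl(H^{1,1}(X_b)\cap H^2(X_b,\mathbf{Q})\bigr)$ by the Lefschetz $(1,1)$-theorem, and the Noether-Lefschetz locus $\{b\in B^\circ:\rho(X_b)\ge 2\}$ is a countable union of closed analytic subvarieties (the Hodge loci of integral classes), it suffices to prove that every component $C$ of that locus is a proper subvariety of $B^\circ$.

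So fix a component $C$; by the structure of the Noether-Lefschetz locus we may choose a rational $(1,1)$-class $\mu$ on a general fibre $X_b$ ($b\in C$ a smooth point), not proportional to $h$ and with $C$ contained in the Hodge locus of $\mu$; then $X_b$ is smooth of expected dimension, (a) and (b) hold at $b$, and $\partial_1(\mu)\ne 0$ by the previous paragraph. After passing to the universal cover, $\mu$ spreads out to a flat section $\widetilde\mu$ of $\cH$ which, along (a lift of) the Hodge locus of $\mu$, stays of type $(1,1)$; equivalently the holomorphic section $[\widetilde\mu]$ of the holomorphic bundle $\cH/F^1$ vanishes there. Its derivative at the zero $b$ is, by Griffiths transversality in the graded piece $F^1/F^2\to(F^0/F^1)\otimes\Omega^1_{B^\circ}$, the map $\xi\mapsto -\overline\nabla_\xi\mu\in H^{0,2}(X_b)=H^2(X_b,\cO_{X_b})$ --- note that although $\widetilde\mu$ is flat, $F^1$ is not, so the derivative is governed by the infinitesimal period map. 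Hence $T_bC$ is contained in $\{\xi\in T_bB^\circ:\overline\nabla_\xi\mu=0\}$. By Griffiths' description, $\overline\nabla_\xi\mu=\kappa(\xi)\lrcorner\mu$ with $\kappa:T_bB^\circ\to H^1(X_b,T_{X_b})$ the Kodaira-Spencer map; and since the family is cut out by sections of $W$, $\kappa$ factors as $T_bB^\circ\twoheadrightarrow V_0\xrightarrow{\,\partial_0\,}H^1(X_b,T_{X_b})$, where $V_0:=\operatorname{Im}\bigl(H^0(Y,W)\to H^0(X_b,W|_{X_b})\bigr)$ and $\partial_0$ is the connecting homomorphism of the normal bundle sequence $0\to T_{X_b}\to T_Y|_{X_b}\to W|_{X_b}\to 0$. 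Therefore $T_bC$ has positive codimension in $T_bB^\circ$, so that $C$ is proper, as soon as the map $V_0\to H^2(X_b,\cO_{X_b})$, $\overline\eta\mapsto\partial_0(\overline\eta)\lrcorner\mu$, is nonzero.

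This is where the remaining hypotheses enter. The normal and conormal sequences above are $\cO_{X_b}$-dual and carry the same extension class $e\in H^1(X_b,T_{X_b}\otimes W^*|_{X_b})$; writing $\partial_0(\overline\eta)=\overline\eta\cup e$ and $\partial_1(\mu)=\mu\cup e$ and contracting, one obtains the compatibility $\partial_0(\overline\eta)\lrcorner\mu=\pm\,\langle\partial_1(\mu),\overline\eta\rangle$ in $H^2(X_b,\cO_{X_b})$, where $\langle\ ,\ \rangle\colon H^2(X_b,W^*|_{X_b})\otimes H^0(X_b,W|_{X_b})\to H^2(X_b,\cO_{X_b})$ is the natural pairing. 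On the other hand, Serre duality on $X_b$ identifies $H^{n-2}(X_b,K_{X_b})^\vee\cong H^2(X_b,\cO_{X_b})$ and $H^{n-2}(X_b,W|_{X_b}\otimes K_{X_b})^\vee\cong H^2(X_b,W^*|_{X_b})$, under which the surjectivity in (b) is precisely the injectivity of the map $H^2(X_b,W^*|_{X_b})\to\operatorname{Hom}\bigl(V_0,H^2(X_b,\cO_{X_b})\bigr)$ induced by $\langle\ ,\ \rangle$. Since $\partial_1(\mu)\ne 0$, this injectivity yields some $\overline\eta\in V_0$ with $\langle\partial_1(\mu),\overline\eta\rangle\ne 0$, hence with $\partial_0(\overline\eta)\lrcorner\mu\ne 0$; so $T_bC\subsetneq T_bB^\circ$ and $C$ is proper. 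As the Noether-Lefschetz locus is a countable union of such components, its complement in $B^\circ$ is the very general locus, on which the Picard number of $X_\rho$ equals $1$.

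The main obstacle --- standard though it is --- is the passage in the second paragraph: identifying the tangent space of the Noether-Lefschetz locus with the first-order Hodge locus and recognizing, via Griffiths transversality, that this first-order condition is cut out by the infinitesimal period map rather than being vacuous; this must be set up with some care, e.g.\ through the holomorphic section $[\widetilde\mu]$ of $\cH/F^1$. Almost as delicate is the sign-sensitive identity $\partial_0(\overline\eta)\lrcorner\mu=\pm\langle\partial_1(\mu),\overline\eta\rangle$, which is cleanest to establish by expressing both connecting maps through the common extension class of the two dual short exact sequences. The remaining ingredients --- Bertini for the smoothness and expected dimension of the general zero locus, the role of (a) in identifying the ambient Hodge classes with $\ker(\partial_1)$, and the Serre-duality reformulation of (b) --- are routine.
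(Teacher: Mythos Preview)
Your argument is correct and follows the same approach as the paper's proof: reduce to showing that for every nonzero primitive $(1,1)$-class $\mu$ the map $\xi\mapsto\overline\nabla_\xi\mu$ from $T_\sigma U$ to $H^2(X_\sigma,\cO_{X_\sigma})$ is nontrivial, use (a) via the conormal sequence to see that $\partial_1(\mu)\neq 0$, and then invoke (b) through Serre duality. The paper compresses the first reduction into a citation to Voisin's book, whereas you unpack the Hodge-locus/IVHS mechanism and make the identity $\partial_0(\overline\eta)\lrcorner\mu=\pm\langle\partial_1(\mu),\overline\eta\rangle$ explicit; but the substance is identical.
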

\begin{proof}Let $U\subset H^0(Y, W)$ be the open subset parametrizing sections whose zero locus is smooth of the expected dimension. We set $\pi: \mathcal{X}\rightarrow U$ be the family of $X_{\sigma}$. We have the sequence:
\begin{eqnarray*}\label{sequence1}0\rightarrow T_{X_{\sigma}}\rightarrow T_{\mathcal{X}}|_{X_{\sigma}}\rightarrow T_{U,\sigma}\rightarrow 0.
\end{eqnarray*}
And for any $t\in T_{U,\sigma}$, we have the Kodaira-Spencer class $\delta(t)\in H^1(X_{\sigma}, T_{X_{\sigma}})$, where $\delta$ is the coboundary map in (\ref{sequence1}).

We denote by $h$ the cohomology class of an ample divisor of $Y$, and denote by $H^{1,1}(X_{\sigma})_{\textrm{prim}}$ the primitive $(1,1)$-forms on $X_{\sigma}$ relative to $h$. In order to prove the lemma, we just need to show that for any $\lambda\in H^{1,1}(X_{\sigma})_{\textrm{prim}}$, the map \begin{eqnarray}\label{map1}\nabla_{\lambda}\circ\delta: H^0(Y, W)\simeq T_{U,\sigma}\rightarrow H^2(X_{\sigma}, \cO_{X_{\sigma}})\end{eqnarray} is non-trivial (see for instance \cite[Chapter 5]{V}).

Considering the sequence $$0\rightarrow W^*\rightarrow \Omega^1_{Y}|_{X_{\sigma}}\rightarrow \Omega^1_{X_{\sigma}}\rightarrow 0,$$ we have $$H^1(X_{\sigma}, \Omega^1_{Y}|_{X_{\sigma}})\rightarrow H^1(X_{\sigma}, \Omega_{X_{\sigma}}^1)\xrightarrow{\delta} H^2(X_{\sigma}, W^*).$$ Since $\dim H^1(X_{\sigma}, \Omega^1_{Y}|_{X_{\sigma}})=1$, we have that the map $$\delta: H^{1,1}(X_{\sigma})_{\textrm{prim}}\rightarrow H^2(X_{\sigma}, W^*)$$ is injective. Thus by Serre duality of the map (\ref{map1}), we just need to show that the map $$H^{N-R-2}(X_{\sigma}, K_{X_{\sigma}})\otimes H^0(Y, W)|_{X_{\sigma}}\rightarrow H^{N-R-2}(X_{\sigma}, W\otimes K_{X_{\sigma}})$$ is surjective to conclude the Picard number of a general zero locus $X_{\rho}$ is $1$.
\end{proof}
We then recall Bott's theorem. Since we only work on $G(r+1, V)$ in this paper, we will present Bott's theorem in an elementary way (see for instance \cite[section 4.6]{Sp}).

We call a finite decreasing sequence of integers $c=(c_1,\ldots, c_k)$ a partition. For a vector space $W$ of dimension $k$, we denote by $\Gamma^cW$ the irreducible $\textrm{GL}(W)$-module. For example, we have \begin{eqnarray*}\Gamma^{(k,0,\ldots,0)}W=\sym^kW, \end{eqnarray*}
\begin{eqnarray*}\Gamma^{(0,0,\ldots,-k)}W=\sym^kW^*, \end{eqnarray*}
\begin{eqnarray*}\Gamma^{(1,1,\ldots,1)}W=\det W. \end{eqnarray*}

Let $b=(b_1, b_2, \ldots, b_{n-r})$ and $a=(a_1, a_2, \ldots, a_{r+1})$ be partitions.  We consider \begin{eqnarray*}\phi(b,a)&=&(b_1,b_2, \ldots, b_{n-r}, a_1, a_2, \ldots, a_{r+1})-(1,2,\ldots, n+1)\\
&=&(c_1,c_2, \ldots, c_{n+1}).\end{eqnarray*} We define the set $\Phi_{b,a}$ to be the set consisting of the pairs  $(s, t)$ with $s<t$ and $c_s>c_t$. We will denote by $i(b, a)$ the cardinality of $\Phi_{b, a}$. We then re-order $\phi(b,a)$ by making it decreasing and denote the resulting sequence by $\phi(b,a)^+$ and set $$\psi(b, a):=\phi(b,a) ^++(1,2,\ldots, n+1).$$ If $\psi(b, a)$ is not decreasing, we have $\Gamma^{\psi(b, a)}V=0$ by convention. We have Bott's theorem:
\begin{theo}\label{Bott} With the above notations, we have \begin{itemize}
\item[1)]$H^q(G, \Gamma^bQ\otimes \Gamma^a\Sigma)=0$ if $q\neq i(b, a)$;
\item[2)]$H^{i(b, a)}(G, \Gamma^bQ\otimes \Gamma^a\Sigma)=\Gamma^{\psi(b, a)}V$.
\end{itemize}
\end{theo}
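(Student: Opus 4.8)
The plan is to recognize $\Gamma^bQ\otimes\Gamma^a\Sigma$ as a homogeneous vector bundle on $G=\mathrm{GL}(V)/P$ and to reduce the computation of its cohomology to that of line bundles on the complete flag variety $\mathrm{Fl}(V)$, where Bott's formula can be proved by a hands-on induction. Let $\pi: \mathrm{Fl}(V)\to G$ be the projection that remembers only the $(r+1)$-dimensional step of the flag, and for a weight $\lambda=(\lambda_1,\ldots,\lambda_{n+1})$ let $\cL_\lambda$ be the line bundle on $\mathrm{Fl}(V)$ assembled from the graded pieces of the tautological complete flag according to $\lambda$. The fibres of $\pi$ are products of complete flag varieties of $\Sigma$ and of $Q$, so when $\lambda$ is dominant on each of these two blocks --- which for $\lambda=(b_1,\ldots,b_{n-r},a_1,\ldots,a_{r+1})$ is precisely the hypothesis that $b$ and $a$ be partitions --- Borel-Weil applied in the fibres yields $\pi_*\cL_\lambda\simeq\Gamma^bQ\otimes\Gamma^a\Sigma$ together with $R^{>0}\pi_*\cL_\lambda=0$. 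By the Leray spectral sequence we then get $H^q(G,\Gamma^bQ\otimes\Gamma^a\Sigma)\simeq H^q(\mathrm{Fl}(V),\cL_\lambda)$, so everything comes down to the cohomology of line bundles on $\mathrm{Fl}(V)$.

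For that I would run Demazure's inductive argument. For each $i$, forgetting the $i$-th subspace of the flag defines a $\mathbf{P}^1$-bundle $\pi_i$ from $\mathrm{Fl}(V)$ to the corresponding partial flag variety, and a line bundle of relative degree $d$ along $\pi_i$ satisfies: $R^{>0}\pi_{i*}=0$, with a locally free $\pi_{i*}$, if $d\geq 0$; $R\pi_{i*}=0$ if $d=-1$; and, if $d\leq -2$, a relation $R\pi_{i*}\cL_\lambda\simeq R\pi_{i*}\cL_{s_i\cdot\lambda}[-1]$, where $s_i\cdot\lambda=s_i(\lambda+\rho)-\rho$ is the dot action and $\rho=(1,2,\ldots,n+1)$ in the normalization of the statement. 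Hence: if $\lambda$ is dominant, then $H^0(\mathrm{Fl}(V),\cL_\lambda)$ is the irreducible $\mathrm{GL}(V)$-module of highest weight $\lambda$ and $H^{>0}=0$; otherwise there is an $i$ for which $\cL_\lambda$ has negative relative degree along $\pi_i$, and either that degree equals $-1$ --- which happens exactly when $\lambda+\rho$ has two equal coordinates, and then forces $H^\bullet(\mathrm{Fl}(V),\cL_\lambda)=0$ --- or it is $\leq -2$, in which case Leray gives $H^q(\mathrm{Fl}(V),\cL_\lambda)\simeq H^{q-1}(\mathrm{Fl}(V),\cL_{s_i\cdot\lambda})$ and one recurses with the strictly more dominant weight $s_i\cdot\lambda$. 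After $\ell(w)$ such steps, where $w$ is the permutation carrying $\lambda+\rho$ into the dominant chamber, one arrives at a dominant weight; hence $H^\bullet(\mathrm{Fl}(V),\cL_\lambda)$ either vanishes identically or equals the corresponding irreducible module, placed in the single cohomological degree $\ell(w)$.

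It then remains to transcribe this into the recipe of the statement, taking $\lambda=(b_1,\ldots,b_{n-r},a_1,\ldots,a_{r+1})$, so that $\phi(b,a)=\lambda-(1,2,\ldots,n+1)$ plays the role of $\lambda+\rho$ up to a harmless $\det$-twist: the singular case in which $\lambda+\rho$ has a repeated coordinate is exactly the failure of $\psi(b,a)$ to be decreasing, which is the convention $\Gamma^{\psi(b,a)}V=0$, giving part~1); and when $\phi(b,a)$ has distinct entries, sorting it into the decreasing sequence $\phi(b,a)^+$ produces the module $\Gamma^{\psi(b,a)}V$ with $\psi(b,a)=\phi(b,a)^++(1,2,\ldots,n+1)$, concentrated in the single degree $\ell(w)=i(b,a)=\#\Phi_{b,a}$, which is part~2). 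The routine but genuinely delicate point --- and the one I expect to be the real work --- is pinning down the sign, duality and reflection conventions so that the $\mathbf{P}^1$-bundle steps match on the nose the bookkeeping of the $\rho$-shift $(1,2,\ldots,n+1)$, the rearrangement $\phi(b,a)\mapsto\phi(b,a)^+$, and the count $i(b,a)$; once that dictionary is fixed the rest is formal. Alternatively, one may simply invoke the Borel-Weil-Bott theorem in the form recalled in \cite[section 4.6]{Sp} and carry out only this last translation.
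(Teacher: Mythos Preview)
Your sketch is a correct outline of the standard Demazure--Borel--Weil--Bott argument: push forward from the full flag variety to $G$ to reduce to line bundles, then run the $\mathbf{P}^1$-bundle induction over simple reflections. The bookkeeping you flag at the end (matching the $\rho$-shift and the inversion count $i(b,a)$ to the recipe $\phi\mapsto\phi^+\mapsto\psi$) is indeed the only place requiring care, and it goes through.

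That said, the paper does not prove this theorem at all: it is stated as a black box, with the pointer to \cite[section~4.6]{Sp} already given in the text introducing it. So there is nothing to compare your argument against --- you have supplied a proof where the paper simply quotes the result. Your closing remark, that one may alternatively just invoke the theorem as recalled in \cite{Sp}, is precisely what the paper does.
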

Here is a useful corollary.
\begin{coro}On the Grassmannian $G=G(r+1, V)$, assume that for some partition $a$, we have $$H^q(G, \Gamma^a\Sigma)\neq 0.$$ Then there exists $k\geq 0$ such that $q=k(n-r)$.
\end{coro}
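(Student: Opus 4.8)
The plan is to feed this into Bott's theorem with the trivial $Q$-partition. First I would write $\Gamma^a\Sigma=\Gamma^bQ\otimes\Gamma^a\Sigma$ with $b=(0,\ldots,0)$ of length $n-r$, so that $\Gamma^bQ=\cO_G$. Then, by Theorem~\ref{Bott}, the nonvanishing of $H^q(G,\Gamma^a\Sigma)$ forces both $q=i(b,a)$ and $\Gamma^{\psi(b,a)}V\neq 0$; the latter is equivalent to the sequence $\phi(b,a)$ having pairwise distinct entries, since adding $(1,2,\ldots,n+1)$ to the decreasing rearrangement $\phi(b,a)^+$ produces a (weakly) decreasing $\psi(b,a)$ precisely when consecutive terms of $\phi(b,a)^+$ strictly decrease. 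So it will suffice to show that, under this distinctness hypothesis, $i(b,a)$ is a multiple of $n-r$.

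Next I would inspect $\phi(b,a)=(c_1,\ldots,c_{n+1})=(0,\ldots,0,a_1,\ldots,a_{r+1})-(1,2,\ldots,n+1)$. Its first $n-r$ entries are $c_j=-j$ for $1\le j\le n-r$, that is, exactly the \emph{consecutive} integers $-1,-2,\ldots,-(n-r)$, forming a strictly decreasing block; its last $r+1$ entries are $c_{n-r+i}=a_i-(n-r+i)$ for $1\le i\le r+1$, which are also strictly decreasing because $a$ is a partition. Hence any coincidence among the $c_\ell$ would have to occur across the two blocks, so the distinctness hypothesis says exactly that no $a_i-(n-r+i)$ lies in $\{-1,\ldots,-(n-r)\}$. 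Being an integer outside a run of consecutive integers, each $a_i-(n-r+i)$ is then either $\ge 0$ (hence larger than every entry of the first block) or $\le -(n-r)-1$ (hence smaller than every entry of the first block).

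Finally I would count the pairs defining $i(b,a)$. There is no contribution within either block, each being strictly monotone; a cross pair with index $s$ in the first block and $t=n-r+i$ in the second contributes or not according to the relative order of $c_s$ and $a_i-(n-r+i)$, and by the dichotomy just established this happens for all $n-r$ choices of $s$ when $a_i\ge n-r+i$, and for none of them otherwise. Therefore
\[
q=i(b,a)=(n-r)\cdot\#\{\,i : a_i\ge n-r+i\,\}=k(n-r),\qquad k:=\#\{\,i : a_i\ge n-r+i\,\}\ge 0,
\]
as claimed (and incidentally $0\le k\le r+1$, so the only candidate degrees are $0,n-r,\ldots,(r+1)(n-r)=\dim G$). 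I do not expect a serious obstacle here: the one genuine point is the observation that the first block is a block of \emph{consecutive} integers, which is what forces the all-or-nothing behaviour of the cross pairs; the rest is bookkeeping with Bott's combinatorics, the only thing to be careful about being the orientation of the inequality in the definition of $i(b,a)$ (the one normalized so that $H^0(G,\cO_G)=\C$).
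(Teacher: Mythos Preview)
Your argument is correct and is precisely the natural unpacking of Bott's theorem; the paper states the corollary without proof, so there is nothing further to compare. Your closing caveat is well taken: as written, the paper's definition of $i(b,a)$ (pairs $s<t$ with $c_s>c_t$) would give $i(0,0)=\binom{n+1}{2}$ for $\cO_G$, so the inequality there is evidently a slip and must be read as $c_s<c_t$, exactly the normalization you chose; with that reading your inversion count $q=(n-r)\cdot\#\{i:a_i\ge n-r+i\}$ is the right one and is also what the paper uses implicitly later (e.g.\ in the proof of Lemma~\ref{lemma1}).
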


We will often identify the partitions $(a_1,\ldots,a_{n-r})$ and $(a_1,\ldots,a_{n-r},0)$. The following lemma is crucial.
\begin{lemm}\label{lemma1}We keep the notations as above,
\begin{itemize}
\item[1)] let $a_1$ and $a_2$ be partitions $\geq 0$, the multiplication map $$H^0(G, \Gamma^{a_1}\Sigma^*)\otimes H^0(G, \Gamma^{a_2}\Sigma^*)\rightarrow H^0(G, \Gamma^{a_1}\Sigma^*\otimes\Gamma^{a_2}\Sigma^*)$$ is surjective;
\item[2)]
for some positive partition $a>0$ and some positive integer $d>0$,
if there exists some $k\geq 1$ such that the multiplication $$H^{k(n-r)}(G, \Gamma^{a}\Sigma)\otimes H^0(G, \Sym^d\Sigma^*)\rightarrow H^{k(n-r)}(G, \Gamma^{a}\Sigma\otimes\Sym^{d}\Sigma^*)$$ is not surjective, we have $a_k\geq n-r+k$ and $a_{k+1}\geq k+1$.
\end{itemize}
\end{lemm}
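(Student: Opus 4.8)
The plan is to prove the two parts by quite different means: part 1) is a soft global generation statement, while part 2) needs Bott's theorem together with a combinatorial analysis. For 1), I would use that $\Sigma^*$ is a quotient of $V^*\otimes\cO_G$ with $H^0(G,\Sigma^*)=V^*$, so $\Gamma^{a_i}\Sigma^*$ is a quotient of $(V^*)^{\otimes|a_i|}\otimes\cO_G$ and the evaluation $(V^*)^{\otimes|a_i|}\to H^0(G,\Gamma^{a_i}\Sigma^*)$ is surjective. Hence $\Gamma^{a_1}\Sigma^*\otimes\Gamma^{a_2}\Sigma^*$ is a quotient of $(V^*)^{\otimes(|a_1|+|a_2|)}\otimes\cO_G$, so it is globally generated and $(V^*)^{\otimes(|a_1|+|a_2|)}\to H^0(G,\Gamma^{a_1}\Sigma^*\otimes\Gamma^{a_2}\Sigma^*)$ is onto. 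A pointwise check shows this map factors through $H^0(G,\Gamma^{a_1}\Sigma^*)\otimes H^0(G,\Gamma^{a_2}\Sigma^*)$ and that the second arrow of the factorization is exactly the multiplication map, which is therefore surjective. (Alternatively one can decompose both sides into $\mathrm{GL}(V)$-irreducibles via Bott's theorem and the Littlewood--Richardson rule and observe that every summand of the target already occurs, with the same multiplicity, in the source.)

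For 2), the first step is to rephrase the question. Since $H^0(G,\Sym^d\Sigma^*)=\Sym^dV^*$ through the surjection $\Sym^dV^*\otimes\cO_G\twoheadrightarrow\Sym^d\Sigma^*$, the multiplication map in the statement is identified with the map obtained by applying $H^{k(n-r)}(G,-)$ to the bundle surjection $\Gamma^a\Sigma\otimes\Sym^dV^*\otimes\cO_G\to\Gamma^a\Sigma\otimes\Sym^d\Sigma^*$. The second step is to resolve $\Sym^d\Sigma^*$ by the Koszul complex of $0\to Q^*\to V^*\otimes\cO_G\to\Sigma^*\to0$: the complex with terms $\Gamma^a\Sigma\otimes\wedge^pQ^*\otimes\Sym^{d-p}V^*$ for $0\le p\le\min(n-r,d)$ is exact and resolves $\Gamma^a\Sigma\otimes\Sym^d\Sigma^*$. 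Splitting it into short exact sequences (equivalently, using the hypercohomology spectral sequence) and using that the factors $\Sym^{d-p}V^*$ are constant bundles, one concludes: if the multiplication map is not surjective, then $H^{k(n-r)+p}(G,\Gamma^a\Sigma\otimes\wedge^pQ^*)\ne0$ for some $1\le p\le\min(n-r,d)$.

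The last step converts this non-vanishing into the stated inequalities, and it is here that the real work lies. Writing $\wedge^pQ^*=\Gamma^bQ$ with $b=(0,\dots,0,-1,\dots,-1)$ ($n-r-p$ zeros and $p$ entries equal to $-1$), Theorem \ref{Bott} shows that $H^{k(n-r)+p}(G,\Gamma^bQ\otimes\Gamma^a\Sigma)\ne0$ forces $\phi(b,a)$ to be regular and $i(b,a)=k(n-r)+p$. Writing out $\phi(b,a)$, regularity forces, for each $i$, that $a_i\le i-2$, or $a_i=i+p-1$, or $a_i\ge n-r+i$; and then a direct count gives $i(b,a)=(n-r)\cdot\#\{i:a_i\ge n-r+i\}+p\cdot\#\{i:a_i=i+p-1\}$. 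Since $a$ is weakly decreasing, $\{i:a_i\ge n-r+i\}$ is an initial segment $\{1,\dots,K\}$ and $\{i:a_i=i+p-1\}$ has at most one element; matching the displayed expression with $k(n-r)+p$, with $1\le p\le n-r$, then forces $K=k$ — hence $a_k\ge n-r+k$ — and forces the unique index with $a_i=i+p-1$ (when it exists) to be $i=k+1$ — hence $a_{k+1}=k+p\ge k+1$; the borderline case $p=n-r$, where one could instead have $K=k+1$, also gives both inequalities. I expect the main obstacle to be precisely this combinatorial bookkeeping — in particular the $p=n-r$ borderline and the small-$n-r$ degenerate cases — while the two reduction steps above are routine.
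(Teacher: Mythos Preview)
Your proof is correct, but both parts follow a genuinely different route from the paper.

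For part 1), the paper computes $H^0(G,\Gamma^{a_i}\Sigma^*)=\Gamma^{a_i}V^*$ by Bott, decomposes source and target by the Littlewood--Richardson rule, and observes that since $\dim V^*>\rank\Sigma^*$ every irreducible summand of the target already appears (with at least the same multiplicity) in the source. Your global-generation argument is softer and avoids Bott entirely; you do mention the paper's argument as an alternative.

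For part 2) the contrast is sharper. The paper decomposes $\Gamma^a\Sigma\otimes\Sym^d\Sigma^*=\bigoplus_\alpha\Gamma^\alpha\Sigma$ via Pieri, and then argues in two steps: if $a_k<n-r+k$ both sides vanish; if $a_{k+1}\le k$, it restricts to a sub-Grassmannian $G(k,V_1)\hookrightarrow G$, splits off suitable direct summands, applies Serre duality and part 1) to produce a nonzero map, and then invokes irreducibility of $\Gamma^{\psi(0,\alpha)}V$ and Schur's lemma to upgrade ``nonzero'' to ``surjective'' on each summand. Your approach instead resolves $\Sym^d\Sigma^*$ by the Koszul complex of $0\to Q^*\to V^*\otimes\cO_G\to\Sigma^*\to 0$, identifies the multiplication map with the edge map of the resulting hypercohomology spectral sequence, and reduces non-surjectivity to the non-vanishing of some $H^{k(n-r)+p}(G,\Gamma^a\Sigma\otimes\wedge^pQ^*)$, which you then read off directly from Bott. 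Your combinatorial bookkeeping (the trichotomy $a_i\le i-2$, $a_i=i+p-1$, $a_i\ge n-r+i$; the formula $i(b,a)=(n-r)K+pM$; the initial-segment and $M\le 1$ observations; and the $p=n-r$ borderline) is correct.

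What each approach buys: the paper's argument is more geometric and exhibits the surjectivity constructively via restriction, but it needs part 1) as input and the sub-Grassmannian trick. Your argument is self-contained, does not use part 1), and reduces everything to a single Bott computation; the price is the combinatorial case analysis at the end, which you have carried out carefully.
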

\begin{proof}By Theorem \ref{Bott}, we have $H^0(G, \Gamma^{a_1}\Sigma^*)=\Gamma^{a_1}V^*$ and $H^0(G, \Gamma^{a_2}\Sigma^*)=\Gamma^{a_2}V^*$. We have by the Littlewood-Richardson rule that
\begin{eqnarray*}\Gamma^{a_1}\Sigma^*\otimes\Gamma^{a_2}\Sigma^*=\bigoplus_vN_{v}\cdot\Gamma^v\Sigma^*,\\
\Gamma^{a_1}V^*\otimes\Gamma^{a_2}V^*=\bigoplus_vN_{v}'\cdot\Gamma^vV^*.
\end{eqnarray*}
Since $\dim V^*>\rank\Sigma^*$, $N_{v}'\geq N_{v}$ for any partition $v$. Then we conclude the proof of $1)$ by Theorem \ref{Bott}.

We write \begin{eqnarray*}\Gamma^a\Sigma\otimes \Sym^d\Sigma^*=\bigoplus_{\alpha}\Gamma^{\alpha}\Sigma,\end{eqnarray*} where by the Littlewood-Richardson rule, $\alpha$ goes through all partitions satisfying $$a_1\geq \alpha_1\geq a_2\geq \alpha_2\geq\cdots\geq a_{r+1}\geq \alpha_{r+1},$$ and $$\sum_{i=1}^{r+1}\alpha_i=\sum_{i=1}^{r+1}a_i-d.$$

Fix any such partition $\alpha$.

If $a_k<n-r+k$, we have $\alpha_k<n-r+k$. Hence by Bott's theorem, we have $$H^{k(n-r)}(G, \Gamma^{a}\Sigma)=H^{k(n-r)}(G, \Gamma^{a}\Sigma\otimes\Sym^{d}\Sigma^*)=0,$$ which contradicts our hypothesis.

If $a_{k+1}\leq k$, $H^{k(n-r)}(G, \Gamma^a\Sigma)\simeq \Gamma^{\psi(0, a)}V\neq 0$. We now study the map
\begin{multline*}m_{\alpha}: H^{k(n-r)}(G, \Gamma^a\Sigma)\otimes H^0(G, \Sym^d\Sigma^*)\rightarrow H^{k(n-r)}(G, \Gamma^a\Sigma\otimes\Sym^d\Sigma^*)\\\rightarrow H^{k(n-r)}(G, \Gamma^{\alpha}\Sigma)\end{multline*}

Now we denote by $V_1$ a linear subspace of codimension-($r+1-k)$ of $V$ and take a linear subspace $V_2$ of $V$ so that $V\simeq V_1\oplus V_2$. Let $G_1:=G(k, V_1)$ be the Grassmannian and denote by $i: G_1\rightarrow G$ the natural embedding which sends a subspace $\Sigma_1$ of $V_1$ to a subspace $\Sigma_1\oplus V_2$ of $V$. Then we denote by $\overline{a}$ (resp. $\overline{\alpha}$) the partition $(a_1,\ldots,a_k)$ (resp. $(\alpha_{1},\ldots,\alpha_{k})$) and denote by $\widetilde{a}$ (resp. $\widetilde{\alpha}$) the partition $(a_{k+1},\ldots, a_{r+1})$ (resp. $(\alpha_{k+1},\ldots, \alpha_{r+1})$). Hence $a=(\overline{a}, \widetilde{a})$ and $\alpha=(\overline{\alpha}, \widetilde{\alpha})$. Finally, we denote $d_1=\sum_{i=1}^{k}(a_i-\alpha_i)$ and $d_2=\sum_{i=k+1}^{r+1}(a_i-\alpha_i)$. We have both $d_1$ and $d_2$ are $\geq 0$ and $d_1+d_2=d$.
We notice that $\Gamma^{\overline{a}}\Sigma_1\otimes\Gamma^{\widetilde{a}}V_2$ (resp. $\Sym^{d_1}\Sigma_1^*\otimes\Sym^{d_2}V_2^*$) is a direct summand of $i^*\Gamma^{a}\Sigma$ (resp. $i^*\Sym^d\Sigma^*$).

We notice that $i(G_1)$ is the zero locus of a global section of $Q\otimes V_2^*$ in $G$. Hence,
by Bott's theorem, the restriction maps \begin{eqnarray}\label{restriction}\nonumber&r_1&: H^{k(n-r)}(G, \Gamma^a\Sigma)\simeq \Gamma^{\psi(0,a)}V\rightarrow H^{k(n-r)}(G_1, i^*\Gamma^a\Sigma)\\ &r_2&: H^0(G, \Sym^d\Sigma^*)\rightarrow H^0(G_1, i^*\Sym^d\Sigma_1^*)\\\nonumber &r_3&: H^0(G, \Gamma^{\alpha}\Sigma)\rightarrow H^0(G_1, i^*\Gamma^{\alpha}\Sigma)\end{eqnarray} are surjective.

We now consider on $G_1$ the multiplication \begin{multline*} m_{\overline{\alpha}, \widetilde{\alpha}}: H^{k(n-r)}(G_1, \Gamma^{\overline{a}}\Sigma_1\otimes\Gamma^{\widetilde{a}}V_2)\otimes H^0(G_1, \Sym^{d_1}\Sigma_1^*\otimes\Sym^{d_2}V_2^*)\\\rightarrow H^{k(n-r)}(G_1, \Gamma^{\overline{\alpha}}\Sigma_1\otimes\Gamma^{\widetilde{\alpha}}V_2),\end{multline*} where both sides are non-zero.
Take the Serre duality, the multiplication \begin{multline*} H^{0}(G_1, \Gamma^{\overline{\alpha}}\Sigma_1^*\otimes K_{G_1}\otimes\Gamma^{\widetilde{\alpha}}V_2^* )\otimes H^0(G_1, \Sym^{d_1}\Sigma_1^*\otimes\Sym^{d_2}V_2^*)\\\rightarrow H^{k(n-r)}(G_1, \Gamma^{\overline{a}}\Sigma_1^*\otimes K_{G_1}\otimes\Gamma^{\widetilde{a}}V_2^*)\end{multline*} is surjective by the statement $1)$ of this lemma.
Therefore, the map $m_{\overline{\alpha}, \widetilde{\alpha}}$ is non-trivial.

We then consider the commutative diagram:
\begin{eqnarray*}
\xymatrix{
H^{k(n-r)}(G, \Gamma^a\Sigma)\otimes H^0(G, \Sym^d\Sigma^*)\ar[d]^{r_1\otimes r_2}\ar[r]^{m_{\alpha}}& H^{k(n-r)}(G, \Gamma^{\alpha}\Sigma)\ar[d]^{r_3}\\
H^{k(n-r)}(G_1, i^*\Gamma^a\Sigma)\otimes H^0(G_1, i^*\Sym^d\Sigma^*)\ar[d]_{\textrm{projection to direct summand}}\ar[r]^{m_{\alpha}|_{G_1}}& H^{k(n-r)}(G_1, i^*\Gamma^{\alpha}\Sigma))\ar[d]_{\textrm{projection to direct summand}}\\
H^{k(n-r)}(G_1, \Gamma^{\overline{a}}\Sigma_1\otimes\Gamma^{\widetilde{a}}V_2)\otimes H^0(G_1, \Sym^{d_1}\Sigma_1^*\otimes\Sym^{d_2}V_2^*)\ar[r]^(.6){m_{\overline{\alpha}, \widetilde{\alpha}} }& H^{k(n-r)}(G_1, \Gamma^{\overline{\alpha}}\Sigma_1\otimes\Gamma^{\widetilde{\alpha}}V_2)
}
\end{eqnarray*}

Since we know by $(\ref{restriction})$ that all the vertical maps are surjective and we have seen above that the map $m_{\overline{\alpha},\widetilde{\alpha}}$ is non-trivial, we deduce that $m_{\alpha}$ is also non-trivial. Moreover, since $\Gamma^{\psi(0,\alpha)}V$ is an irreducible $\textrm{GL}(V)$-module, $m_{\alpha}$ is surjective.

Since the multiplicity of each $\Gamma^{\alpha}\Sigma$ in $\Gamma^a\Sigma\otimes \Sym^d\Sigma^*$ is $1$ and $\Gamma^{\psi(0, \alpha)}V$ is an irreducible $\textrm{GL}(V)$-module, by Schur's Lemma, we deduce that the multiplication $$H^{k(n-r)}(G, \Gamma^{a}\Sigma)\otimes H^0(G, \Sym^d\Sigma^*)\rightarrow H^{k(n-r)}(G, \Gamma^{a}\Sigma\otimes\Sym^{d}\Sigma^*)$$ is again surjective.

This concludes the proof.
\end{proof}
\begin{rema}\upshape
Assume we have $H^i(G, \Gamma^{\alpha}\Sigma)\neq 0$ and $H^j(G, \Gamma^{\beta}\Sigma)\neq 0$, for partitions $\alpha, \beta$ and integers $i,j>0$. The multiplication $$H^i(G, \Gamma^{\alpha}\Sigma)\otimes H^j(G, \Gamma^{\beta}\Sigma)\rightarrow H^{i+j}(G, \Gamma^{\alpha}\Sigma\otimes \Gamma^{\beta}\Sigma)$$ is in general NOT surjective.

We may consider in $G:=G(2,4)$. Let $\alpha=\beta=(3,1)$. Then $\Gamma^{(4,4)}\Sigma$ is a direct summand of $\Gamma^{(3,1)}\Sigma\otimes\Gamma^{(3,1)}\Sigma$. However, through restrictions on $G(2,3)\hookrightarrow G$, we can see that the multiplication $$H^2(G, \Gamma^{(3,1)}\Sigma)\otimes H^2(G, \Gamma^{(3,1)}\Sigma)\rightarrow H^4(G, \Gamma^{(4,4)}\Sigma)$$ is $0$.

The cup products of line bundles on homogeneous varieties have been studied intensively in \cite{DT}.
\end{rema}
\section{Proof of the main theorem when $\dim(F_r(X))=2$}
Under the assumptions of the main theorem, we will assume furthermore that $\delta(n, \underline{d}, r)=2$ in this section. We notice that $F_r(X)$ is smooth and connected.

\begin{prop}\label{main prop}Assume $\delta(n, \underline{d}, r)=2$. Set $G:=G(r+1, V)$. Then, for any $1\leq i\leq s$, the map $$H^0(F_r(X), K_{F_r(X)})\otimes H^0(G, \Sym^{d_i}\Sigma^*)\rightarrow H^0(F_r(X), \Sym^{d_i}\Sigma^*\otimes K_{F_r(X)})$$ is surjective, except in the following cases:
\begin{itemize}
\item $X$ is a complete intersection of two quadrics in $\mathbf{P}^5$ and $r=1$;
\item $X$ is a cubic in $\mathbf{P}^4$ and $r=1$.
\end{itemize}
\end{prop}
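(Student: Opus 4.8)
The Proposition is precisely hypothesis b) of Lemma \ref{hodge} for the surface $F:=F_r(X)$, read with $H^{0}$ in place of $H^{N-R-2}$ since $\dim F=\delta(n,\underline d,r)=2$. Concretely $F$ is the zero locus of a regular section of the globally generated bundle $E:=\bigoplus_{i=1}^{s}\Sym^{d_i}\Sigma^{*}$ on $G=G(r+1,V)$, of rank $N:=\binom{\underline d+r}{r}=\dim G-2$, and by adjunction $K_F=(K_G\otimes\det E)|_F$, where $K_G=(\det\Sigma)^{\otimes(n+1)}$ because $\Omega^1_G\simeq\Sigma\otimes Q^{*}$ and $\det\Sigma\simeq\det Q^{*}$. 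The plan is to transfer the multiplication in the statement to the Grassmannian, where Theorem \ref{Bott} and Lemma \ref{lemma1} are available.

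First I would tensor the Koszul resolution $0\to\wedge^{N}E^{*}\to\cdots\to E^{*}\to\cO_G\to\cO_F\to 0$ by $K_G\otimes\det E$ and by $K_G\otimes\det E\otimes\Sym^{d_i}\Sigma^{*}$; using $\wedge^{p}E^{*}\otimes\det E\simeq\wedge^{N-p}E$ this produces locally free resolutions of $K_F$ and of $K_F\otimes\Sym^{d_i}\Sigma^{*}|_F$ on $G$, and the map of the Proposition is induced by the morphism of complexes ``multiply by a section of $\Sym^{d_i}\Sigma^{*}$''. Running the two hypercohomology spectral sequences, $H^0(F,K_F)$ and $H^0(F,K_F\otimes\Sym^{d_i}\Sigma^{*}|_F)$ get filtrations whose graded pieces are subquotients of the groups $H^{q}(G,\wedge^{N-j}E\otimes K_G)$ (resp. with an extra factor $\Sym^{d_i}\Sigma^{*}$) lying on the antidiagonal $q=j$. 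I would then decompose $\wedge^{N-j}E$ and $\wedge^{N-j}E\otimes\Sym^{d_i}\Sigma^{*}$ into irreducible summands $\Gamma^{c}\Sigma^{*}$ via the Littlewood--Richardson rule and the plethysm of $\wedge^{\bullet}\Sym^{d_i}\Sigma^{*}$, and observe that $\Gamma^{c}\Sigma^{*}\otimes K_G=\Gamma^{a}\Sigma$ has, by Bott's theorem, cohomology concentrated in one degree of the form $k(n-r)$. Using $\delta(n,\underline d,r)=2$ and $\delta_-(n,\underline d,r)>0$ I would show that all the ``off-diagonal'' cohomology of the intermediate Koszul terms vanishes, so that both spectral sequences degenerate on total degree $0$; the two $H^0$'s become direct sums of the surviving Bott pieces, and the multiplication on $F$ becomes the direct sum, over the finitely many partitions $a$ that occur, of the multiplication maps
\begin{equation*}
H^{k(n-r)}(G,\Gamma^{a}\Sigma)\otimes H^0(G,\Sym^{d_i}\Sigma^{*})\longrightarrow H^{k(n-r)}(G,\Gamma^{a}\Sigma\otimes\Sym^{d_i}\Sigma^{*})
\end{equation*}
on $G$.

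At this point I would apply Lemma \ref{lemma1}(2): each such map is surjective unless $a_{k}\geq n-r+k$ and $a_{k+1}\geq k+1$. It then remains to run through the (short) list of partitions $c$ that can occur in $\wedge^{p}\Sym^{d_i}\Sigma^{*}$ on a rank-$(r+1)$ bundle with $\dim G-\rank E=2$, and to check that the bad inequalities can hold only for $s=1$, $\underline d=(3)$, $n=4$, $r=1$ and for $s=2$, $\underline d=(2,2)$, $n=5$, $r=1$; in every other case all the maps above are surjective, which is the assertion of the Proposition.

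I expect the middle step to be the real obstacle: pushing the Koszul--Bott bookkeeping far enough to guarantee the degeneration of both spectral sequences and the blockwise splitting of the multiplication on $F$, so that surjectivity genuinely reduces to the $G$-level statements handled by Lemma \ref{lemma1}. In practice only a handful of terms of the twisted Koszul complexes contribute to $H^0$ --- roughly $\det E\otimes K_G$ together with $\wedge^{p}E\otimes K_G$ for small $p$ --- and proving that all the others have vanishing cohomology in the antidiagonal degree is a delicate case analysis with Theorem \ref{Bott}, which moreover has to be carried out in tandem with the numerology that singles out the two exceptional families. (Note that in exactly these two cases Lemma \ref{hodge} is inconclusive, yet $\rho(F_r(X))=1$ for very general $X$ still holds by the arguments of Examples \ref{exam1} and \ref{exam2}.)
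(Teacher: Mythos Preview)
Your overall strategy---Koszul resolution of $\cO_{F_r(X)}$, Bott's theorem, then Lemma \ref{lemma1}(2)---is exactly the paper's. Two points deserve comment.

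First, the paper does not prove (and does not need) that the two spectral sequences degenerate. It only observes that $H^0(F_r(X),K_{F_r(X)}\otimes\Sym^{d_i}\Sigma^*)$ carries a filtration with $F_k/F_{k+1}$ a \emph{quotient} of $H^{k(n-r)}(G,\wedge^{k(n-r)}W\otimes\cO_G(MH)\otimes\Sym^{d_i}\Sigma^*)$, where $W=\bigoplus_j\Sym^{d_j}\Sigma$ and $K_{F_r(X)}=\cO_G(MH)|_{F_r(X)}$ with $M=\binom{\underline d+r}{r+1}-n-1$. Surjectivity of each multiplication map $\Psi_{k,i}$ on $G$ then forces surjectivity on each graded piece, hence of the original map. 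So the ``middle step'' you flag as the obstacle is not one.

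The real gap is elsewhere: your phrase ``run through the (short) list of partitions'' is not justified. The condition $\delta(n,\underline d,r)=2$ is satisfied by infinitely many triples $(n,\underline d,r)$, and for each of them the bundle $\wedge^{k(n-r)}W\otimes\cO_G(MH)$ has many irreducible summands. The paper reduces this to a finite check in two steps you are missing. It first proves a separate representation-theoretic bound (Lemma \ref{representation}): for any irreducible summand $\Gamma^\lambda V$ of $\wedge^t W$ with $t$ not too large, $\lambda_k$ is bounded above. Combining this upper bound with the lower bound $\alpha_k\ge M+n-r+k$ coming from Lemma \ref{lemma1}(2) yields the purely numerical inequality
\[
\frac{1}{r+1}\binom{r+\underline d}{r}+\frac{2}{r+1}+r+1-k\;\ge\;\frac{1}{k}\Bigl[\binom{r+\underline d}{r}-\binom{r-k+\underline d}{r-k}\Bigr],
\]
which involves only $(n,\underline d,r,k)$ and no partitions. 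The paper then notes that the gap between the two sides is strictly increasing in each $d_i$, which cuts the problem down to about ten concrete triples $(n,\underline d,r)$; each of these is then dispatched by an explicit decomposition of $\wedge^{k(n-r)}W$ (the paper uses the program \textsc{Lie}) to verify that the constraints $\alpha_k\ge M+n-r+k$, $\alpha_{k+1}\ge M+k+1$ fail except in the two exceptional cases. Without Lemma \ref{representation} and the monotonicity argument, there is no mechanism in your proposal to make the case analysis finite.
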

\begin{proof}Denote by $H$ the Pl\"{u}cker polarization on $G$. Set $$M=\binom{\underline{d}+r}{r+1}-n-1.$$ Then $K_{F_r(X)}=MH$. Since $\delta(n, \underline{d}, r)=2$, we can verify that $M\geq 0$ with equality only when $n=5$, $r=1$, and $\underline{d}=(2,2)$.

Let $W$ be the vector bundle $$\bigoplus_{i=1}^s\Sym^{d_i}\Sigma$$ on $G$. We have the following resolution for the structure sheaf $\cO_{F_r(X)}$:
$$0\rightarrow \wedge^{\binom{\underline{d}+r}{r}}W\rightarrow\cdots\rightarrow\wedge^{n-1}W\rightarrow\cdots\rightarrow W\rightarrow\cO_{G}\rightarrow\cO_{F_r(X)}\rightarrow 0$$
It follows that there is a decreasing filtration $F_{*}$ on $H^0(F, \Sym^{d_i}\Sigma^*\otimes K_{F_r(X)})$ such that $$H^{k(n
-r)}(G, \wedge^{k(n-r)}W\otimes\Sym^{d_i}\Sigma^*\otimes\cO_G(MH))\twoheadrightarrow F_k/F_{k+1},$$
and in order to prove the proposition, we just need to prove that the multiplication
\begin{multline*}\label{multication}
\Psi_{k,i}: H^{k(n-r)}(G, \wedge^{k(n-r)}W\otimes\cO_G(MH))\otimes H^0(G, \Sym^{d_i}\Sigma^*)\rightarrow\\H^{k(n-r)}(G, \Sym^{d_i}\Sigma^*\otimes \wedge^{k(n-r)}W\otimes\cO_G(MH))\end{multline*} is surjective for all $0\leq k\leq r$ and $1\leq i\leq s$.

We now assume that there exist $k$ and $i$ such that $\Psi_{k,i}$ is not surjective.

Considering the decomposition of the vector bundle $\wedge^{k(n-r)}W$ on $G$, we may write  $$\wedge^{k(n-r)}W=\bigoplus_{\alpha}\Gamma^{\alpha}\Sigma.$$ Then according to Lemma \ref{lemma1}, $k\geq 1$ and there exists $\alpha$ such that \begin{eqnarray}\label{inequality1}&&\alpha_{k}\geq M+n-r+k,\\\label{inequality2}&& \alpha_{k+1}\geq M+k+1.\end{eqnarray} Moreover, we notice that $M+n-r+k=\binom{\underline{d}+r}{r+1}-r-1+k$ and $$\det W= -\binom{\underline{d}+r}{r+1}H= \Gamma^{\big(\binom{\underline{d}+r}{r+1}, \ldots, \binom{\underline{d}+r}{r+1}\big)}\Sigma.$$ Hence by Lemma \ref{representation}  (take $s=r+1-k$), we have the following inequality:
\begin{eqnarray*}\label{ingredient}k(n-r)\geq \left[\binom{r+\underline{d}}{r}-\binom{r-k+\underline{d}}{r-k}\right]-k(r+1-k).\end{eqnarray*}
Since $\delta(n, \underline{d}, r)=2$,
we conclude that  \begin{eqnarray}\label{inequality3}&&\frac{1}{r+1} \binom{r+\underline{d}}{r}+\frac{2}{r+1}+r+1-k\\\nonumber\geq &&\frac{1}{k} \left[\binom{r+\underline{d}}{r}-\binom{r-k+\underline{d}}{r-k}\right].\end{eqnarray}

Since $k\leq r$, the function $$\frac{1}{k} \left[\binom{r+\underline{d}}{r}-\binom{r-k+\underline{d}}{r-k}\right]-\frac{1}{r+1} \binom{r+\underline{d}}{r}>0$$ is a strictly increasing function of each $d_i$.
%\begin{itemize}
%\item $d=2$,  $\frac{1}{2}(r+1-k)$;
%
%\item $d=3$,  $\frac{1}{6}[2r^2+7r+5-(3r+6)k+k^2]$;
%
%\item $d=4$,  $\frac{1}{24}[3r^3+21r^2+44r+26-(6r^2+30r+35)k+(4r+10)k^2-k^3]$
%
%\end{itemize}
%We may first check $k=r$ and then do the induction.
by induction on $d_i$, we can check that the inequality (\ref{inequality3}) holds for some $k$ only in the following cases:
\begin{itemize}\label{diagram2}
\item  $r=1$
\item[1.1)]$\underline{d}=(2,2)$, $M=0$, and $n=5$;
\item[1.2)]$\underline{d}=(2,2,2,2)$, $M=3$, and $n=8$;
\item[1.3)] $\underline{d}=3$, $M=1$, and $n=4$;
\item[1.4)] $\underline{d}=(3,3)$, $M=5$, and $n=6$;
\item[1.5)] $\underline{d}=(3,2,2)$, $M=4$, and $n=7$;
\item[1.6)] $\underline{d}=(4,2)$, $M=6$ and $n=6$;
\item[1.7)] $\underline{d}=5$, $M=9$, and $n=5$;
\item $r=2$
\item[2.1)] $\underline{d}=3$, $M=3$, and $n=6$;
\item[2.2)] $\underline{d}=(3,2)$, $M=5$, and $n=8$;
\item $r=3$
\item[3)]$\underline{d}=(2,2,2)$, $M=3$, and $n=11$.

\end{itemize}

In the next step, we shall check in the above cases whether the inequalities $(\ref{inequality1})$  and $(\ref{inequality2})$ actually hold for some $\alpha$. We will use the Program Lie to get precise information of the decompositions of $\wedge^{k(n-r)}W$ and then show that these inequalities do not hold except in cases $1.1)$ and $1.3)$. This will conclude the proof of the proposition.
Since this is only a computation, we just show the case $r=3$.

In case 3), we have $n=11$, $r=3$, and $M=3$.

When $k=1$, we have $\alpha_1\geq 12$ and $\alpha_2\geq 5$ by (\ref{inequality1}) and (\ref{inequality2}). However the total weight for $\wedge^8W=\wedge^8(\Sym^2\Sigma\oplus\Sym^2\Sigma\oplus\Sym^2\Sigma)$ is only $16$: this is impossible.

When $k=3$, we have $\alpha_1\geq \alpha_2\geq \alpha_3\geq 14$, and $\alpha_4\geq 7.$ But the total weight of $\wedge^{24}(\Sym^2\Sigma\oplus\Sym^2\Sigma\oplus\Sym^2\Sigma)$ is $48$. This is again impossible.

We have the decompositions $$\wedge^t\Sym^2\Sigma=\oplus_{i}\Gamma^{\alpha^t_i}\Sigma,$$
where
\begin{itemize}
\item $t=1$, $\alpha_1^1=(2,0,0,0)$;
\item $t=2$, $\alpha_1^2=(3,1,0,0)$;
\item $t=3$, $\alpha_1^3=(4,1,1,0)$, and $\alpha_2^3=(3,3,0,0)$;
\item $t=4$, $\alpha_1^4=(4,3,1,0)$, and $\alpha_2^4=(5,1,1,1)$;
\item $t=5$, $\alpha_1^5=(5,3,1,1)$, and $\alpha_2^5=(4,4,2,0)$;
\item $t=6$, $\alpha_1^6=(5,4,2,1)$, and $\alpha_2^6=(4,4,4,0)$;
\item $t=7$, $\alpha_1^7=(5,4,4,1)$, and $\alpha_2^7=(5,5,2,2)$;
\item $t=8$, $\alpha_1^8=(5,5,4,2)$;
\item $t=9$, $\alpha_1^9=(5,5,5,3)$;
\item $t=10$, $\alpha_1^{10}=(5,5,5,5)$.
\end{itemize}

When $k=2$, we have $\alpha_1\geq \alpha_2\geq 13$ and $\alpha_3\geq 6$. Hence $\Gamma^{(13,13,6,0)}\Sigma$ should be a subbundle of $\wedge^{16}W=\wedge^{16}(\Sym^2\Sigma\oplus\Sym^2\Sigma\oplus\Sym^2\Sigma)$. We have $$\wedge^{16}(\Sym^2\Sigma\oplus\Sym^2\Sigma\oplus\Sym^2\Sigma)=
\bigoplus_{\substack{(s_1,s_2,s_3)\\s_1+s_2+s_3=16}}\wedge^{s_1}\Sym^2\Sigma\otimes \wedge^{s_1}\Sym^2\Sigma\otimes\wedge^{s_3}\Sym^2\Sigma.$$
By the above list, we see that any $\alpha_i^t$ with $(\alpha_i^t)_4=0$ has $(\alpha_i^t)_1\leq 4$.
Hence we exclude this case by the Littlewood-Richardson rule.

We have thus excluded the case $3)$.

\end{proof}

\begin{lemm}\label{representation}Let $V$ be a vector space of dimension $r+1$ and let $\underline{d}=(d_1,\ldots,d_s)$ be a sequence of integers $\geq 2$. Denote by $W$ the vector space $ \Sym^{d_1}V\oplus\cdots\oplus\Sym^{d_s}V$.
For any integer $s\geq 0$ and $1\leq k\leq r$, we take an integer $t$ such that $$0<t<\left[\binom{\underline{d}+r}{r}-\binom{\underline{d}+r-k}{r-k}\right]-ks.$$ Then for any irreducible component $\Gamma^{\lambda}V$ of $\wedge^tW$, we have $$\lambda_k< \binom{\underline{d}+r}{r+1}-s.$$
\end{lemm}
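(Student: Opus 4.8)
The plan is to deduce the lemma from a ``complementary'' statement about a \emph{large} exterior power of $W$, which is then settled by an elementary count of weights. Fix a basis $x_1,\dots,x_{r+1}$ of $V$ and the associated torus, with weights $\epsilon_1,\dots,\epsilon_{r+1}$, so that $\textrm{GL}(V)$-irreducibles $\Gamma^\lambda V$ are indexed by partitions $\lambda=(\lambda_1\ge\cdots\ge\lambda_{r+1}\ge 0)$. Set $D=\dim W=\binom{\underline d+r}{r}$ and $N=\binom{\underline d+r}{r+1}$. Two routine computations, both instances of $\frac{d}{r+1}\binom{d+r}{r}=\binom{d+r}{r+1}$, will be used: (i) over all basis monomials $m$ of $W=\bigoplus_i\Sym^{d_i}V$ one has $\sum_m\deg_{x_j}(m)=N$ for each $j$; and (ii) consequently the sum of all weights of $W$ is $N(\epsilon_1+\cdots+\epsilon_{r+1})$, so $\det W=(\det V)^{\otimes N}$.

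First I would set up the duality. The perfect pairing $\wedge^tW\otimes\wedge^{D-t}W\to\wedge^DW=\det W$ yields a $\textrm{GL}(V)$-isomorphism $\wedge^tW\cong(\wedge^{D-t}W)^*\otimes(\det V)^{\otimes N}$. Hence $\Gamma^\lambda V$ is a component of $\wedge^tW$ precisely when $\Gamma^\mu V$ is a component of $\wedge^{D-t}W$, where $\mu_j=N-\lambda_{r+2-j}$; this $\mu$ is an honest partition because $\lambda$, being a weight of $\wedge^tW$, is a sum $\sum_{p=1}^t e_p$ of exponent vectors of $t$ distinct basis monomials of $W$, so $\lambda_1=\sum_p(e_p)_1\le N$ by (i). Since $\lambda_k=N-\mu_{r+2-k}$, the desired inequality $\lambda_k<N-s$ is equivalent to $\mu_{r+2-k}\ge s+1$, and the hypothesis $0<t<\binom{\underline d+r}{r}-\binom{\underline d+r-k}{r-k}-ks$ becomes $\binom{\underline d+r-k}{r-k}+ks<D-t$. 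So it remains to prove: \emph{if $\Gamma^\mu V$ is a component of $\wedge^{t'}W$ with $t'>\binom{\underline d+r-k}{r-k}+ks$, then $\mu_{r+2-k}\ge s+1$.}

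For this I would use extremal weights. Since $\Gamma^\mu V\subseteq\wedge^{t'}W$ as $\textrm{GL}(V)$-modules, every weight of $\Gamma^\mu V$ is a weight of $\wedge^{t'}W$; in particular, so is every permutation of $\mu$. Choose a permutation $w$ of $\mu$ whose first $k$ entries are the $k$ smallest parts $\mu_{r+2-k}\ge\cdots\ge\mu_{r+1}$. Because a weight basis of $\wedge^{t'}W$ consists of wedges of $t'$ distinct basis monomials of $W$ with weight the sum of their exponent vectors, one has $w=\sum_{p=1}^{t'}e_p$ for suitable distinct basis monomials $m_p$. Now $w_1+\cdots+w_k=\sum_p(\deg_{x_1}m_p+\cdots+\deg_{x_k}m_p)$, and a monomial contributes $0$ to this sum exactly when it lies in $\bigoplus_i\Sym^{d_i}\langle x_{k+1},\dots,x_{r+1}\rangle$, a space of dimension $\sum_i\binom{d_i+r-k}{r-k}=\binom{\underline d+r-k}{r-k}$. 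Hence at most $\binom{\underline d+r-k}{r-k}$ of the $m_p$ contribute $0$, and each of the remaining $t'-\binom{\underline d+r-k}{r-k}>ks$ monomials contributes at least $1$, so $w_1+\cdots+w_k\ge ks+1$. But $w_1+\cdots+w_k=\mu_{r+2-k}+\cdots+\mu_{r+1}\le k\,\mu_{r+2-k}$, so $k\,\mu_{r+2-k}\ge ks+1$, and as $\mu_{r+2-k}$ is an integer, $\mu_{r+2-k}\ge s+1$. Translating back, $\lambda_k=N-\mu_{r+2-k}<N-s$.

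The heart of the matter — and the step I would single out as the only non-bookkeeping one — is the passage from a bound on a partial \emph{sum} of $\mu$ to a bound on the single part $\mu_{r+2-k}$. Applying the pigeonhole directly to $\mu$ would only give $\mu_1+\cdots+\mu_k\ge ks+1$, which is far too weak; the point is to route the $k$ forced ``units of mass'' into the $k$ \emph{smallest} parts of $\mu$ by choosing the right permutation $w$, after which monotonicity $\mu_{r+2-k}\ge\mu_{r+3-k}\ge\cdots$ converts the bound on the sum into the bound $\mu_{r+2-k}\ge s+1$. Everything else — the two weight computations, the exterior-power duality $\wedge^tW\cong(\wedge^{D-t}W)^*\otimes\det W$, and the correspondence $\mu_j=N-\lambda_{r+2-j}$ — is standard.
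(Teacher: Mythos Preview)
Your argument is correct and substantially more elementary than the paper's. The paper proves the lemma geometrically: it embeds $X=G(r+1-k,V)$ into the Grassmannian $Y$ of $\binom{d+r-k}{r-k}$-dimensional subspaces of $W$ via $\Sigma_X\mapsto\Sym^d\Sigma_X$, filters $\wedge^tW=H^0(Y,\wedge^tQ_Y)$ by powers of the ideal sheaf $\cI_X$, and controls the graded pieces using the graded bundle $U=\mathrm{Gr}(i^*Q_Y)$, the formula for $\det U$, and Bott's theorem. The bound on $\lambda_k$ then emerges from a weight count on $\wedge^{T-t}U^*$ combined with Littlewood--Richardson. By contrast, you bypass all geometry: after the exterior-power duality $\wedge^tW\cong(\wedge^{D-t}W)^*\otimes(\det V)^{\otimes N}$ reduces the claim to $\mu_{r+2-k}\ge s+1$ for components of a \emph{large} exterior power, you exploit the fact that the Weyl-permuted weight placing the $k$ smallest parts of $\mu$ in the first $k$ slots must still be realizable as a sum of $t'$ distinct monomial exponent vectors, and a pigeonhole on how many of those monomials can avoid $x_1,\dots,x_k$ gives the bound directly. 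Your approach is shorter, needs no cohomology or Bott, and makes the role of the quantity $\binom{\underline d+r-k}{r-k}$ transparent (it is literally the number of basis monomials supported on the last $r+1-k$ variables). The paper's route, on the other hand, is the natural one in context since the surrounding arguments already live on Grassmannians and use Bott systematically; it also parallels Debarre--Manivel's Lemme~3.9 rather closely.
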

The proof of this lemma is parallel to the proof of Lemme 3.9 in \cite{DM}. For reader's convenience, we give the details.
\begin{proof}For simplicity, we assume $\underline{d}=d$.
We denote by $X$ the Grassmannian parametrizing subspaces of  dimension $r+1-k$ of $V$ and denote by $Y$
the Grassmannian parametrizing subspaces of dimension $\binom{d+r-k}{r-k}$ of $W$. We denote respectively  by $\Sigma_X$ (resp. $Q_X$) and $\Sigma_Y$ (resp. $Q_Y$) the tautological subbundle (resp. quotient bundle) on $X$ and $Y$. There is a natural embedding $i: X\hookrightarrow Y$ so that $i^*\Sigma_Y=\Sym^d\Sigma_X$. Let $N$ be the normal bundle of $X\hookrightarrow Y$.

By Bott's theorem, we have $H^0(Y, \wedge^tQ_Y)=\wedge^tW$. Considering the exact sequences:
$$0\rightarrow\cI_X\otimes \wedge^tQ_Y\rightarrow \wedge^tQ_Y\rightarrow\wedge^tQ_Y|_X\rightarrow 0,$$and for each $l\geq 1$,
$$0\rightarrow\cI_X^{l+1}\otimes \wedge^tQ_Y\rightarrow \cI_X^{l}\otimes\wedge^tQ_Y\rightarrow\wedge^tQ_Y|_X\otimes\Sym^lN^*\rightarrow 0,$$ we see that
there exists a filtration $(\Gamma_l)_{l\geq 0}$ on $\wedge^tW$ so that $$\Gamma_l/\Gamma_{l+1}\hookrightarrow H^0(X, i^*(\wedge^tQ_Y)\otimes\Sym^lN^*).$$

There is a filtration $(G_m)_{0\leq m\leq d-1}$ of $i^*Q_Y$ such that $$G_m/G_{m+1}=\Sym^{d-m}Q_X\otimes\Sym^m\Sigma_X.$$ Denote by $$U=\textrm{Gr}(i^*Q_Y)=\oplus_{m=0}^{d-1}\Sym^{d-m}Q_X\otimes\Sym^m\Sigma_X.$$
Hence $$\Gamma_l/\Gamma_{l+1}\hookrightarrow H^0(X, i^*(\wedge^tQ_Y)\otimes \Sym^lN^*)\hookrightarrow H^0(X, \wedge^tU\otimes \Sym^lN^*).$$

We now set $T=\binom{d+r}{r}-\binom{d+r-k}{r-k}$ the rank of $U$. Then $$\wedge^tU=\det U\otimes \wedge^{T-t}U^*.$$

By definition of $t$, we have $T-t>ks$, and hence considering the total weights, for any irreducible component $\Gamma^{\widehat{\alpha}}Q_X^*\otimes\Gamma^{\widehat{\beta}}\Sigma_X^*$ of $\wedge^{T-t}U^*$, we have $\widehat{\alpha}_1>s$. Moreover, we notice that $$\det U=(\det Q_X)^{\otimes \binom{d+r}{r+1}}\otimes (\det \Sigma_X)^{\otimes \big(\binom{d+r}{r+1}-\binom{d+r-k}{r-k+1}\big)}.$$ Therefore, for any irreducible component $\Gamma^{\alpha}Q_X\otimes\Gamma^{\beta}\Sigma_X$ of $\wedge^{T-t}U$, we have $$\alpha_k<\binom{d+r}{r+1}-s.$$
We notice that $N^*$ is a subbundle of $i^*\Omega_Y^1=\Sym^d\Sigma_X\otimes i^*Q_Y^*$. Hence, by Littlewood-Richardson rule, for any irreducible component $\Gamma^{\alpha}Q_X\otimes\Gamma^{\beta}\Sigma_X$ of $\wedge^{T-t}U\otimes \Sym^lN^*$, we still have $\alpha_k<\binom{d+r}{r+1}-s.$

Therefore, by Bott's theorem, for any irreducible component $\Gamma^{\lambda}V$ of $\wedge^tW$, we have $$\lambda_k< \binom{d+r}{r+1}-s.$$

 \end{proof}
\begin{lemm}\label{main lemm}If $\delta(n, \underline{d}, r)=2$, we have $$\dim H^1(F_r(X), \Omega_{G}^1|_{F_r(X)})=1,$$ except when $X\subset \mathbf{P}^5$ is a smooth complete intersection of two quadrics and $r=1$.
\end{lemm}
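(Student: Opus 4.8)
The plan is to compute $H^1(F_r(X),\Omega_G^1|_{F_r(X)})$ via the Koszul resolution of $\cO_{F_r(X)}$ inside $G=G(r+1,V)$, exactly as in the proof of Proposition \ref{main prop}. Recall $F_r(X)$ is the zero locus of a section of $W=\bigoplus_{i=1}^s\Sym^{d_i}\Sigma$, which has rank $R=\binom{\underline d+r}{r}$ and $\dim G - R = \delta(n,\underline d,r)=2$. Twisting the Koszul complex
$$0\to\wedge^{R}W^*\to\cdots\to\wedge^{2}W^*\to W^*\to\cO_G\to\cO_{F_r(X)}\to 0$$
by $\Omega_G^1\simeq\Sigma\otimes Q^*$ and chasing the hypercohomology spectral sequence, one finds that $H^1(F_r(X),\Omega_G^1|_{F_r(X)})$ is computed by the groups $H^{q}(G,\wedge^{q}W^*\otimes\Sigma\otimes Q^*)$ for $q\geq 0$, together with the $q=0$ piece $H^1(G,\Sigma\otimes Q^*)=H^1(G,\Omega_G^1)$, which is one-dimensional since $G$ has Picard number $1$. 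So the first step is: \emph{show that all the "higher" contributions $H^{q}(G,\wedge^{q}W^*\otimes\Sigma\otimes Q^*)$, as well as the adjacent terms $H^{q-1}$ and $H^{q+1}$ of the relevant $E_1$-page entries that could survive to the $H^1$ of $\Omega_G^1|_{F_r(X)}$, vanish} — outside the exceptional case $n=5$, $r=1$, $\underline d=(2,2)$.

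To carry this out I would decompose each $\wedge^{q}W^*=\bigoplus_\alpha \Gamma^\alpha\Sigma^*$ by the Littlewood–Richardson rule (here $q$ ranges over multiples that force, after $\wedge^{k(n-r)}$ decompositions as in Proposition \ref{main prop}, a nontrivial bidegree in $\Sigma$), then compute $\Gamma^\alpha\Sigma^*\otimes\Sigma\otimes Q^*$ and apply Bott's theorem (Theorem \ref{Bott}) term by term. The key point is a numerical one, parallel to Lemma \ref{representation}: for the degree of a potentially nonzero cohomology group to match the Koszul index $q=k(n-r)$, the partition $\alpha$ appearing in $\wedge^{k(n-r)}W^*$ must have large entries in positions $k$ and $k+1$, and Lemma \ref{representation} bounds those entries from above in terms of $\binom{\underline d+r}{r+1}$ and the total weight $\binom{\underline d+r}{r+1}$ of $\wedge^{k(n-r)}W^*$. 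Combining the Bott-nonvanishing constraint with this weight bound produces, just as in Proposition \ref{main prop}, an inequality of the shape $(\ref{inequality3})$, whose only solutions with $\delta(n,\underline d,r)=2$ lie on the short list $1.1)$–$3)$ already tabulated. One then checks each case by hand (or with Lie, as in Proposition \ref{main prop}), and finds that the only genuine surviving contribution — a second class in $H^1(\Omega_G^1|_{F_r(X)})$ — occurs precisely for the complete intersection of two quadrics in $\mathbf P^5$ with $r=1$ (where $M=0$, $K_{F_r(X)}=\cO$, and $F_1(X)$ is in fact an abelian surface, so $h^{1,1}$ is visibly larger).

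The main obstacle is the bookkeeping at index $q=k(n-r)$: unlike in Proposition \ref{main prop}, where one only needed surjectivity of a multiplication map, here one needs the \emph{vanishing} of $H^{k(n-r)}(G,\wedge^{k(n-r)}W^*\otimes\Sigma\otimes Q^*)$, and the extra factor $\Sigma\otimes Q^*$ shifts the partitions so that several Littlewood–Richardson summands must all be checked against Bott's vanishing criterion simultaneously. I expect the tensoring with $Q^*$ (i.e.\ with a negative box in the $Q$-slot) to be harmless for the numerics — it can only help the weight bound — but it does force one to track the $Q$-partition alongside the $\Sigma$-partition in every case of the list, and to rule out the sporadic possibility that $\psi(b,a)$ becomes a valid decreasing sequence after the shift. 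Once the list is in hand this is a finite check; the conceptual content is entirely in reusing the inequality $(\ref{inequality3})$ and Lemma \ref{representation}, so the lemma follows by the same mechanism that proved Proposition \ref{main prop}.
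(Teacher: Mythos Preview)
Your approach is in principle workable but takes a much longer route than the paper does. The paper's proof observes that Debarre and Manivel, in the course of proving Th\'eor\`eme 3.4 of \cite{DM}, already established $\dim H^1(F_r(X),\Omega_G^1|_{F_r(X)})=1$ whenever $\delta_{-}(n,\underline d,r)\geq 2$. Under the hypothesis $\delta(n,\underline d,r)=2$, the condition $\delta_{-}=1$ leaves only three cases: $(n,r,\underline d)=(4,1,3)$, $(6,2,3)$, and $(5,1,(2,2))$. The paper then checks directly, via the Koszul resolution and Bott's theorem (with the program Lie for the decompositions), that $H^i(G,\Omega_G^1\otimes\cI_{F_r(X)})=0$ for $i=1,2$ in the first two cases; the third is the stated exception.

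By contrast, you are proposing to redo the full analysis from scratch, reduce to the ten-case list $1.1)$--$3)$ from Proposition~\ref{main prop}, and then check each one. Besides being longer, your outline has a soft spot you yourself flag: the presence of the $Q^*$ factor in $\Omega_G^1\simeq\Sigma\otimes Q^*$ means the Bott analysis is genuinely two-parameter (a $Q$-partition and a $\Sigma$-partition), so the inequality $(\ref{inequality3})$ and Lemma~\ref{representation} do not apply verbatim---a separate argument is needed, and ``I expect \ldots\ to be harmless'' is not yet one. The paper sidesteps all of this by citing \cite{DM} and reducing to a three-item list; that is the efficiency you are missing. (Minor point: $F_r(X)$ is the zero locus of a section of $\bigoplus_i\Sym^{d_i}\Sigma^*$, not of $W=\bigoplus_i\Sym^{d_i}\Sigma$; with the paper's convention the Koszul terms are $\wedge^tW$, not $\wedge^tW^*$.)
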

\begin{proof}We first notice that in the proof of Th\'{e}or\`{e}me 3.4 in \cite{DM}, the authors showed that $$\dim H^1(F_r(X), \Omega_{G}^1|_{F_r(X)})=1,$$ if $\delta_{-}(n, \underline{d}, r)\geq 2$. As we assume here $\delta(n, \underline{d}, r)=2$, the cases when $\delta_{-}(n, \underline{d}, r)= 1$ are
\begin{itemize}
\item $n=4$, $r=1$, $\underline{d}=3$;
\item $n=6$, $r=2$, $\underline{d}=3$;
\item $n=5$, $r=1$, $\underline{d}=(2,2)$.
\end{itemize}

We then need to show that in the first two cases $H^i(G, \Omega_G^1\otimes \cI_{F_r(X)})=0$ for $i=1,2$. Again denote by $W$ the vector bundle $\bigoplus_i\Sym^{d_i}\Sigma$. By the Koszul resolution of $\cI_{F_r(X)}$, we need to prove that $$H^{i+t-1}(G, \Omega_G^1\otimes \wedge^tW)=0,$$ for all $t\geq 1$ and $i=1,2$.
We can use the program Lie to check the decompositions of $\wedge^tW$ in each case. By Bott's theorem, we conclude the proof of the lemma.
\end{proof}

\begin{theo}\label{dim=2}Let $X$ be a very general complete intersection on $\mathbf{P}^n$ such that $\delta(n, \underline{d}, r)=2$. Then the Picard number $\rho(F_r(X))$ is $1$.
\end{theo}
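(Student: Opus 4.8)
The plan is to deduce the theorem from Lemma~\ref{hodge}, applied with $Y=G=G(r+1,V)$ and with $W$ the bundle $\bigoplus_{i=1}^s\Sym^{d_i}\Sigma^*$ whose general section cuts out $F_r(X)$. The Grassmannian $G$ has Picard number $1$, $W$ is globally generated (being a quotient of $\bigoplus_i\Sym^{d_i}V^*\otimes\cO_G$), and
$$N-R=\dim G-\rank W=(r+1)(n-r)-\binom{\underline d+r}{r}=\delta(n,\underline d,r)=2\geq 2,$$
while smoothness of $F_r(X)$ of the expected dimension for general $X$ is ensured by Theorem~2; so the hypotheses of Lemma~\ref{hodge} are in place once conditions a) and b) are verified. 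Since $N-R-2=0$, condition b) is exactly the surjectivity of
$$H^0(F_r(X),K_{F_r(X)})\otimes H^0(G,W)\longrightarrow H^0(F_r(X),W\otimes K_{F_r(X)}),$$
which, as $W$ is the direct sum of the bundles $\Sym^{d_i}\Sigma^*$, breaks up into the multiplication maps treated in Proposition~\ref{main prop}; and condition a) is precisely the content of Lemma~\ref{main lemm}. Note also that $H^0(G,W)=\bigoplus_i\Sym^{d_i}V^*$ by Bott's theorem, so ``very general section of $W$'' is the same as ``very general complete intersection $X$'', which matches the conclusion we want.

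The first step is then pure assembly: whenever $X$ is neither a cubic in $\mathbf P^4$ with $r=1$ nor a complete intersection of two quadrics in $\mathbf P^5$ with $r=1$, Lemma~\ref{main lemm} supplies condition a) and Proposition~\ref{main prop} supplies condition b), so Lemma~\ref{hodge} gives $\rho(F_r(X))=1$ for very general $X$. It remains to settle the two exceptional cases — which are exactly the $(n,\underline d,r)$ for which $h^1(F_r(X),\Omega^1_G|_{F_r(X)})>1$ or the multiplication of Proposition~\ref{main prop} is not onto — by the direct Hodge-theoretic arguments already recorded in the examples. For a cubic threefold and $r=1$, $F_1(X)$ is the Fano surface of lines, and the Clemens--Griffiths analysis of Example~\ref{exam2} gives $H^2(F_1(X),\mathbf Q)\simeq\wedge^2H^3(X,\mathbf Q)$, so the Zariski density of the monodromy of the family of cubic threefolds in the symplectic group forces $\rho(F_1(X))=1$ for $X$ very general. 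For a complete intersection of two quadrics in $\mathbf P^5$ and $r=1$, $F_1(X)$ is by Reid's theorem the Jacobian of a genus-$2$ curve — the case $r=1$ of Example~\ref{exam1} — and a very general such abelian surface has Picard number $1$ by \cite{P}. This exhausts all possibilities: for $\delta(n,\underline d,r)=2$ with $r\geq 1$ a direct computation shows that neither exceptional family of Theorem~3 (a quadric in $\mathbf P^{2r+3}$, or a complete intersection of two quadrics in $\mathbf P^{2r+4}$) can occur, both having $\delta>2$, so there is no conflict with the statement $\rho(F_r(X))=1$.

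The genuine difficulty of this section lies not in the assembly above but in Proposition~\ref{main prop}: the reduction, via Lemmas~\ref{lemma1} and~\ref{representation} and Bott's theorem, of condition b) to surjectivity statements on Grassmannians, followed by the finite Littlewood--Richardson case analysis carried out with LiE. Relative to that, the only real obstacle left for the present theorem is the bookkeeping — confirming that the union of the exceptional lists of Lemma~\ref{main lemm} and Proposition~\ref{main prop} is precisely the cubic in $\mathbf P^4$ with $r=1$ together with the complete intersection of two quadrics in $\mathbf P^5$ with $r=1$, and that each of these two cases is genuinely covered by one of the earlier examples.
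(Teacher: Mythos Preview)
Your proposal is correct and follows essentially the same route as the paper: apply Lemma~\ref{hodge} with $Y=G$ and $W=\bigoplus_i\Sym^{d_i}\Sigma^*$, supply condition a) by Lemma~\ref{main lemm} and condition b) by Proposition~\ref{main prop}, and treat the two exceptional cases (cubic threefold with $r=1$; intersection of two quadrics in $\mathbf P^5$ with $r=1$) via Examples~\ref{exam2} and~\ref{exam1}. The paper's own proof is the two-sentence version of exactly this assembly.
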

\begin{proof}By Lemma \ref{hodge}, Proposition \ref{main prop}, and Lemma \ref{main lemm}, we have proved the theorem except when $r=1$ and $X$ is a complete intersection of two quadrics in $\mathbf{P}^5$ or $r=1$ and $X$ is a cubic threefold. These two remaining cases were studied in Example \ref{exam1} and Example \ref{exam2}.
\end{proof}
\section{Proof of the remaining cases}\label{section2}
In this section, we would like to check the remaining situations, namely when $\dim F_r(X)=\delta(n,\underline{d},r)\geq 3$ and $1\leq \delta_{-}(n,\underline{d},r)\leq 2$ (see Theorem 2). The list is the following:
\begin{itemize}\label{diagram3}
\item $d_1=2$:
\item[(Q.1)] $\underline{d}=(2)$, and $2r+2\leq n\leq 2r+3$;
\item[(Q.2)] $\underline{d}=(2, 2)$, and $2r+3\leq n\leq 2r+4$;
\item[(Q.3)] $\underline{d}=(2,2,2)$, $r=1$, $n=7$, and $\delta(n,\underline{d},r)=3$;
\item[(Q.4)] $\underline{d}=(2,2,2)$, $r=2$, $n=9$, and $\delta(n,\underline{d},r)=3$;
\item $d_1=3$:
\item[(C.1)] $\underline{d}=(3)$, $r=3$, $n=9$, and $\delta(n,\underline{d},r)=4$;
\item[(C.2)] $\underline{d}=(3)$, $r=2$, $n=7$, and $\delta(n,\underline{d},r)=5$;
\item[(C.3)] $\underline{d}=(3)$, $r=1$, $n=5$, and $\delta(n,\underline{d},r)=4$;
\item[(C.4)] $\underline{d}=(3,2)$, $r=1$, $n=6$, and $\delta(n,\underline{d},r)=3$;
\item $d_1=4$
\item[(Qr)] $\underline{d}=(4)$, $r=1$, $n=5$, and $\delta(n,\underline{d},r)=3$.
 \end{itemize}

%We have $r+s+2\geq n-r\geq \sum_{i=1}^s\frac{1}{r+1}\binom{d_i+r}{r}+\frac{3}{r+1}$.
%\begin{itemize}
%\item $d_1=2$, $\frac{r}{2}s\leq r+2-\frac{3}{r+1}$, hence $s\leq 2$, or $s=3$ and $r=1,2$.
%\item $d_1=3$, $\frac{r^2+5r}{6}s_1+\frac{r}{2}s_2\leq r+2-\frac{3}{r+1}$, hence $s_1=1$, $s_2=0$, and $r\leq 3$, or $s_1=1$, $s_2=1$, and $r=1$.
%\item $d_1=4$, $\frac{r^3+9r^2+26r}{24}s_1+\frac{r^2+5r}{6}s_2+\frac{r}{2}s_3\leq r+2-\frac{3}{r+1}$, hence $s_1=1$, $s_2=s_3=0$, and $r=1$.
%\end{itemize}

We will first study case by case the complete intersections of less than $2$ quadrics and then discuss the others.
\subsection{Case (Q.1)}

In case $(\textrm{Q.1})$, $X\subset \mathbf{P}^{2r+3}$ is a smooth quadric, and $F_{r+1}(X)$ has two isomorphic connected components, denoted by $S_1$ and $S_2$. Each $r$-plane in $X$ is contained in exactly one $(r+1)$-plane in each component of $F_{r+1}(X)$. Hence $F_{r}(X)\simeq \mathbf{P}_{S_1}(\Sigma^*)$. In particular, $\rho(F_{r}(X))=\rho(S_1)+1$. We then compute $\rho(F_{r}(X))$ using the short exact sequence:
$$0\rightarrow \Sym^2\Sigma\rightarrow \Omega_{G}^1|_{F_{r }(X)}\rightarrow \Omega_{F_{r}(X)}^1\rightarrow 0.$$
We again have a resolution of $\cO_{F_{r}(X)}$:
\begin{eqnarray*}
 0\rightarrow\wedge^{\frac{(r+1)(r+2)}{2}}\Sym^2\Sigma\rightarrow\cdots\rightarrow\wedge^{2}\Sym^2\Sigma\rightarrow
 \Sym^2\Sigma\rightarrow\cO_{G}\rightarrow\cO_{F_{r}(X)}\rightarrow 0.
\end{eqnarray*}
By Bott's theorem, we have \begin{eqnarray*}&&h^j(F_{r}(X), \Sym^2\Sigma)=\begin{cases}0& \text{if } j\neq 2\\1& \text{if } j=2\end{cases}\\&& h^j(F_{r}(X),  \Omega_{G}^1|_{F_{r}(X)})=\begin{cases}0& \text{if } j\neq 1\\1& \text{if } j=1\end{cases}\end{eqnarray*}

Therefore, if $n=2r+3$, $\rho(F_r(X))=2$ and if $n=2r+1$, the Picard number of each component of $F_r(X)$ is $1$.

If $n=2r+2$, we can also compute that \begin{eqnarray*}&&h^j(F_{r}(X), \Sym^2\Sigma)=0,\;  \text{for any} \;j  \\&& h^j(F_{r}(X),  \Omega_{G}^1|_{F_{r}(X)})=\begin{cases}0& \text{if } j\neq 1\\1& \text{if } j=1\end{cases}\end{eqnarray*}
hence in this case $\rho(F_r(X))=1$.
\begin{rema}\upshape
A quadric is a homogeneous variety. If $X$ is of dimension $2r+2$ (resp. $2r+1$), $X=G/P_1$ (resp. $G'/P'_1$), where $G$ (resp. $G'$) is a complex simple Lie group of type $D_{r+2}$ (resp. $B_{r+1}$) and $P_1$ (resp. $P'_1$) is a maximal parabolic subgroup of $G$ (resp. $G'$).

Hence the above results can also be found in \cite{LM}, Theorem 4.9. Indeed, Landsberg and Manivel's result says much more.  For instance, their result implies that, if $\dim X=2r+2$, $F_r(X)$ is isomorphic to $G/P_{r+1,r+2}$ and $F_{r+1}(X)=G/P_{r+1}\sqcup G/P_{r+2}$. Hence $\rho(F_r(X))=2$ and the Picard number of each component of $F_{r+1}(X)$ is $1$. Similarly, if $\dim X=2r+1$, $F_r(X)=G'/P'_{r+1}$ and $\rho(F_r(X))=1$.

\end{rema}

\subsection{Case (Q.2)}
In case $(\textrm{Q.2})$, we first consider $X\subset \mathbf{P}(V)=\mathbf{P}^{2r+4}$ the smooth complete intersection of two quadrics. Then $F_r(X)$ is a Fano variety of dimension $2r+2$.
\begin{prop} We have $\dim H^1(F_r(X), \Omega_{F_r(X)}^1)=2r+6$. Hence the Picard number $\rho(F_r(X))$ is $2r+6$.
\end{prop}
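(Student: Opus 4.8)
The plan is to compute $\dim H^1(F_r(X),\Omega^1_{F_r(X)})$ by a Koszul/Bott argument and to read off the Picard number from the Fano-ness of $F_r(X)$. First, since $M:=\binom{\underline{d}+r}{r+1}-n-1=2(r+2)-(2r+4)-1=-1$, we have $K_{F_r(X)}=MH=-H$ with $H$ the restriction of the Pl\"ucker polarization, so $F_r(X)$ is Fano; Kodaira vanishing gives $H^i(F_r(X),\cO_{F_r(X)})=0$ for $i>0$, hence $h^{0,2}(F_r(X))=0$ and $\rho(F_r(X))=h^{1,1}(F_r(X))=\dim H^1(F_r(X),\Omega^1_{F_r(X)})$. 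So it suffices to prove this last number is $2r+6$.

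Next I would exploit the conormal sequence. Put $W=\Sym^2\Sigma\oplus\Sym^2\Sigma$ on $G=G(r+1,V)$, $\dim V=2r+5$, so that $F_r(X)$ is the zero locus of a general section of $W^*$ and $W|_{F_r(X)}$ is its conormal bundle:
$$0\to W|_{F_r(X)}\to \Omega^1_G|_{F_r(X)}\to \Omega^1_{F_r(X)}\to 0 .$$
Running the associated long exact sequence and using $H^0(F_r(X),\Omega^1_{F_r(X)})=0$, the number $\dim H^1(F_r(X),\Omega^1_{F_r(X)})$ is governed by the low-degree cohomology of $W|_{F_r(X)}$ and of $\Omega^1_G|_{F_r(X)}=\Sigma\otimes Q^*|_{F_r(X)}$, the connecting homomorphisms, and, for $r\ge1$, possibly the term $H^2(F_r(X),\Omega^1_{F_r(X)})=h^{1,2}(F_r(X))$ as well.

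To compute $H^\bullet(F_r(X),\mathcal E|_{F_r(X)})$ for $\mathcal E=W$ and for $\mathcal E=\Omega^1_G$, I would tensor the Koszul resolution
$$0\to\wedge^{(r+1)(r+2)}W\to\cdots\to W\to\cO_G\to\cO_{F_r(X)}\to 0$$
by $\mathcal E$, decompose $\wedge^pW=\bigoplus_{a+b=p}\wedge^a\Sym^2\Sigma\otimes\wedge^b\Sym^2\Sigma$ into irreducible homogeneous bundles $\Gamma^\lambda\Sigma$ (the plethysm of $\wedge^\bullet$ with $\Sym^2$ being classical, and checked in the finitely many relevant small ranks with the program Lie exactly as in Proposition \ref{main prop}), tensor by $\mathcal E$, and apply Theorem \ref{Bott} summand by summand; the fact that nonvanishing of $H^q(G,\Gamma^\lambda\Sigma)$ forces $q$ to be a multiple of $n-r$ collapses the hypercohomology spectral sequences to a short list of contributions. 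This should yield $H^0(F_r(X),\Omega^1_G|_{F_r(X)})=0$ and $H^1(F_r(X),\Omega^1_G|_{F_r(X)})=\mathbf C$ (the latter already in \cite[Th\'eor\`eme 3.4]{DM}, since $\delta_-(2r+4,(2,2),r)=2$), and $H^0(W|_{F_r(X)})=H^1(W|_{F_r(X)})=0$, together with explicit values for $H^2(W|_{F_r(X)})$ and $H^2(\Omega^1_G|_{F_r(X)})$; plugging these into the long exact sequence gives $\dim H^1(F_r(X),\Omega^1_{F_r(X)})=2r+6$.

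The main obstacle is the bookkeeping in the last step. Contrary to the situation in the earlier sections, $\Omega^1_G|_{F_r(X)}$ does \emph{not} have all of its cohomology in degree $1$ — already for $r=0$, where $F_0(X)$ is the degree-$4$ del Pezzo surface $\mathbf P^4\cap\{\text{two quadrics}\}$ with $h^{1,1}=6=2\cdot0+6$, one computes $H^2(F_0(X),\Omega^1_{\mathbf P^4}|_{F_0(X)})\neq 0$ — so the answer is \emph{not} simply $1+\dim H^2(W|_{F_r(X)})$; the term $H^2(\Omega^1_G|_{F_r(X)})$ and the behaviour of the connecting maps $H^1(\Omega^1_{F_r(X)})\to H^2(W|_{F_r(X)})\to H^2(\Omega^1_G|_{F_r(X)})$ enter essentially, and a parity count already forces (e.g. for $r=1$) $H^2(\Omega^1_G|_{F_r(X)})\neq0$. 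Keeping track of these several Bott contributions and of the connecting maps — for which the multiplicity-one plus Schur-irreducibility mechanism of Lemma \ref{lemma1}, or a direct Lie-program computation of all the groups $H^q(G,\wedge^pW\otimes\mathcal E)$, is the natural tool — is the real content of the proof.
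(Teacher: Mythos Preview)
Your overall strategy --- Fano $\Rightarrow$ $\rho=h^{1,1}$, then conormal sequence plus Koszul plus Bott --- is exactly the paper's, and you have correctly pinpointed the crux: $H^2(F_r(X),\Omega^1_G|_{F_r(X)})$ does not vanish, so the answer hinges on the rank of the map $H^2(F_r(X),W|_{F_r(X)})\to H^2(F_r(X),\Omega^1_G|_{F_r(X)})$. However, your proposal leaves two genuine gaps.

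First, $r\ge 1$ is arbitrary here, unlike the finite list handled in Proposition~\ref{main prop}, so ``checking in the finitely many relevant small ranks with the program Lie'' cannot yield a proof. The paper instead uses the closed plethysm $\wedge^m\Sym^2\Sigma=\bigoplus_\lambda\Gamma^\lambda\Sigma$, the sum over partitions of $2m$ whose Frobenius symbol has the shape $(\lambda_1-1,\dots,\lambda_t-t\mid \lambda_1-2,\dots,\lambda_t-t-1)$ (see \cite{W}), and runs a uniform Littlewood--Richardson argument valid for every $r$. This shows that among the groups $H^{k(n-r)}(G,\wedge^m W\otimes\Sym^2\Sigma)$ only those with $(k,m)\in\{(1,r+1),(1,r+2),(2,2r+4)\}$ can be nonzero, and after checking one Koszul differential is onto one gets $H^2(F_r(X),\Sym^2\Sigma)\cong V$; a parallel argument gives $H^2(F_r(X),\Omega^1_G|_{F_r(X)})\cong V$.

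Second, and more seriously, the map $H^2(W|_{F_r(X)})\to H^2(\Omega^1_G|_{F_r(X)})$ is induced by the conormal inclusion $(\partial Q,\partial Q')^*$, which depends on the chosen quadrics and is \emph{not} $\textrm{GL}(V)$-equivariant. Neither the Schur-irreducibility mechanism of Lemma~\ref{lemma1} nor a Lie computation of the source and target can tell you its rank. The paper's missing ingredient is geometric: under the identifications above this map becomes $V^{\oplus 2}\to V$, and the paper argues it is surjective precisely \emph{because $X$ is smooth}. Only with this in hand does the long exact sequence give
\[
h^{1,1}(F_r(X))=\dim H^1(\Omega^1_G|_{F_r(X)})+\dim\ker\bigl(V^{\oplus 2}\to V\bigr)=1+(2r+5)=2r+6.
\]
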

The case when $r=1$ is already proved in \cite{B}.
\begin{rema}\upshape It is relatively easy to prove that $\rho(F_r(X))\geq 2r+6$. Since $$\dim H^{2r+2}(X)_{\textrm{prim}}=\dim H^{r+1}(X, \Omega_X^{r+1})_{\textrm{prim}}=2r+5,$$ and the Abel-Jacobi map $H^{2r+2}(X)_{\textrm{prim}}\rightarrow H^2(F_r(X))$ is injective.
\end{rema}
\begin{proof} We assume that $X$ is defined by two quadrics $Q$ and $Q'$.

We claim that \begin{eqnarray}\label{coho1}&&h^j(F_{r}(X), \Sym^2\Sigma)=\begin{cases}0& \text{if } j\neq 2,4\\2r+5& \text{if }j=2\end{cases}\end{eqnarray} \begin{eqnarray}\label{coho2}&&h^j(F_{r}(X), \Omega_G^1|_{F_r(X)})=\begin{cases}1& \text{if } j=1\\2r+5& \text{if }j=2\\0 &\text{if } j\neq 1,2,4 \end{cases}\end{eqnarray}

From \cite[Proposition 2.3.9]{W}, we know that there is a decomposition
\begin{eqnarray}\label{plethym}
\wedge^{m}\Sym^2\Sigma=\oplus_{\lambda}\Gamma^{\lambda}\Sigma,
\end{eqnarray}
where $|\lambda|=2m$ and $\lambda$ ranges over all partitions whose Frobenius notation has the form $\lambda=(\lambda_1-1,\ldots,\lambda_t-t\mid\lambda_1-2,\ldots,\lambda_t-t-1)$, where $t$ is the rank of $\lambda$.

We shall compute\begin{eqnarray*}\label{coho3} H_{k,m}:=H^{k(r+4)}(G, \wedge^m(\Sym^2\Sigma\oplus \Sym^2\Sigma)\otimes \Sym^2\Sigma)\end{eqnarray*}to prove (\ref{coho1}) using the Koszul resolution of $\cO_{F_r(X)}$. More precisely, we will prove that only $H_{1,r+1}$, $H_{1,r+2}$, and $H_{2,2r+4}$ may be nonzero and the natural map $H_{1,r+2}\rightarrow H_{1,r+1}$ is surjective.

If $H_{k,m}\neq 0$, there exists a component $\Gamma^a\Sigma$ of $$\wedge^m(\Sym^2\Sigma\oplus \Sym^2\Sigma)\otimes \Sym^2\Sigma$$ satisfying $a_k\geq r+4+k$ and $a_{k+1}\leq k$. We then assume that $$\Gamma^a\Sigma\subset \Gamma^{\beta}\Sigma\otimes\Gamma^{\gamma}\Sigma\otimes \Sym^2\Sigma,$$ where $$\Gamma^{\beta}\Sigma\subset \wedge^{m_1}\Sym^2\Sigma$$ and $$\Gamma^{\gamma}\Sigma\subset \wedge^{m_2}\Sym^2\Sigma,$$ for some $m_1+m_2=m$.

Since $a_{r+1}\leq \cdots\leq a_{k+1}\leq k$, we have by the Littlewood-Richardson rule that \begin{eqnarray}\label{inequality4}\sum_{i=k+1}^{r+1}(\beta_i+\gamma_i)\leq k(r+1-k),\end{eqnarray} and both $\beta_{k+1}$ and $\gamma_{k+1}$ are $\leq k$.

We denote by $p$ (resp. $q$) the largest integer such that $\beta_p\geq k+1$ (resp. $\gamma_q\geq k+1$). Note that $p, q\leq k$. Moreover, by (\ref{plethym}), we have $$\sum_{i=1}^p\beta_i-(k+1)p\leq \sum_{i=k+1}^{r+1}\beta_i,$$ and $$\sum_{i=1}^q\gamma_i-(k+1)q\leq \sum_{i=k+1}^{r+1}\gamma_i.$$

On the other hand, since $a_1\geq \cdots\geq a_k\geq r+4+k$, we again have by the Littlewood-Richardson rule that
\begin{eqnarray}\label{inequality5}\sum_{i=1}^p\beta_i+\sum_{i=1}^q\gamma_i+k(k-p)+k(k-q)&\geq &\sum_{i=1}^k(\beta_i+\gamma_i) \\\nonumber &\geq& \sum_{i=1}^ka_i-2\\\nonumber&\geq& k(r+4+k)-2.\end{eqnarray}

Combining all the above inequalities, we have $$k(r+3+k)\geq k(r+1+k)+p+q\geq k(r+4+k)-2.$$ Namely, if $H_{k,m}\neq 0$, we should have $k\leq 2$.

If $k=1$ and $H_{1,m}\neq 0$, both $\beta_2$ and $\gamma_2$ are $\leq 1$. Hence there exists $1\leq s\leq r+1$ so that $\beta_1=s+1$ and $\beta_2=\cdots=\beta_s=1>\beta_{s+1}=0$. Similarly, there exists $1\leq t\leq r+1$ so that $\gamma_1=t+1$ and $\gamma_2=\cdots=\gamma_t=1>\gamma_{t+1}=0$. We then conclude that $r+1\leq s+t\leq r+2$. Using Bott's theorem, a direct computation shows that $$H_{1, r+2}\simeq V^{\oplus (r+1)},$$ and $$H_{1, r+1}\simeq (V^{*})^{\oplus r},$$
and the natural map in the Koszul complex induces a surjective map $$H_{1,r+2}\rightarrow H_{1,r+1}.$$

If $k=2$ and $H_{2,m}\neq 0$, equality holds in (\ref{inequality4}) and (\ref{inequality5}). Therefore, $$m=m_1+m_2=\frac{1}{2}(|\beta|+|\gamma|)=2r+4,$$ while $2(r+4)-2r-4=4$.

Therefore, we have finished the proof of (\ref{coho1}).

A similar analysis allows us to prove (\ref{coho2}). Indeed, we can show that
$$H_{t,m}':=H^t(G, \wedge^m(\Sym^2\Sigma\oplus\Sym^2\Sigma)\otimes\Omega_G^1)=0,$$
unless $t=1$ and $m=0$, or $t=r+5$ and $r+2\leq m\leq r+3$, or $t=2r+9$ and $m=2r+5$. Moreover, $\dim H_{1,0}'=1$, and the natural map
$$H_{r+5, r+3}'\simeq V^{\oplus r}\rightarrow H_{r+5, r+2}'\simeq (V^*)^{\oplus (r-1)}$$ is surjective and this proves (\ref{coho2}).

We now finish the proof of the proposition.
By the exact sequence,
\begin{eqnarray*}
0\rightarrow \Sym^2\Sigma \oplus \Sym^2\Sigma\xrightarrow{(\partial Q,\partial Q ')^*}  \Omega_{G}^1|_{F_{r}(X)}\rightarrow \Omega_{F_{r}(X)}^1\rightarrow 0,
\end{eqnarray*}
since $X$ is smooth, the map $(\partial Q,\partial Q ')^*$ induces a surjective map between $$H^2(F_r(X), \Sym^2\Sigma\oplus\Sym^2\Sigma)=V^{\oplus 2}\rightarrow H^2(F_r(X), \Omega_G^1|_{F_r(X)})=V$$
We conclude that \begin{eqnarray*}&&h^1(F_{r}(X), \Omega_{F_r(X)}^1)=2r+6.\end{eqnarray*}
Since $F_r(X)$ is Fano, we have $h^{2,0}(F_r(X))=0$, and this concludes the proof of the proposition.
\end{proof}

Assume now $X\subset \mathbf{P}^{2r+3}$ is a very general complete intersection of two quadrics. We already see in Example \ref{exam1} that $\rho(F_r(X))=1$.
\subsection{Remaining cases}
Some of the remaining cases are classical. For instance,
in case $(\textrm{C.3})$, $X$ is a smooth cubic fourfold. By \cite{BD}, it is known that for $X$ very general, $F_1(X)$ is a very general deformation of $S^{[2]}$ for some polarized $K3$ surface $S$, hence $\rho(F_1(X))=1$.

In the remaining cases $(\textrm{Q}.3)$, $(\textrm{Q}.4)$, $(\textrm{C}.1)$, $(\textrm{C}.2)$, $(\textrm{C}.3)$, $(\textrm{C}.4)$, and $(\textrm{Qr})$,  we claim that

%In case (1), $h^{3,1}(X)=h^{1,3}(X)=3$ and $h^{2,2}(X)=38$. For $X$ very general, $\rho(F_1(X))=1$ hence $h^{1,1}(F_1(X))=38$.

%In case (2.2), $h^{3,1}(X)=h^{1,3}(X)=8$ and $h^{2,2}(X)=70$. For $X$ very general, $\rho(F_1(X))=1$ hence $h^{1,1}(F_1(X))=70$.

%In case (3.3), $h^{3,1}(X)=h^{1,3}(X)=21$ and $h^{2,2}(X)=142$. For $X$ very general, $\rho(F_1(X))=1$ hence $h^{1,1}(F_1(X))=142$.

%We just need to verify the following proposition which follows the same computation as in the proof of main theorem.

\begin{prop}Under the above assumptions, for each $d_i$, the multiplication \begin{multline*}H^{\delta(n,\underline{d},r)-2}(F_1(X), K_{F_1(X)})\otimes H^0(G, \Sym^{d_i}\Sigma^*)\\\rightarrow H^{\delta(n,\underline{d},r)-2}(F_1(X), \Sym^{d_i}\Sigma^*\otimes K_{F_1(X)})\end{multline*} is surjective.
\end{prop}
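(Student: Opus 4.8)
The plan is to handle all the remaining cases uniformly by the same strategy used in Proposition \ref{main prop}, now adapted to the situation where the zero locus $F_r(X)$ has dimension $\delta:=\delta(n,\underline{d},r)\geq 3$ rather than $2$. First I would set $G=G(r+1,V)$, let $W=\bigoplus_{i=1}^s\Sym^{d_i}\Sigma$ be the vector bundle on $G$ whose general section cuts out $F_r(X)$, and write $K_{F_r(X)}=MH$ with $M=\binom{\underline{d}+r}{r+1}-n-1$ and $H$ the Pl\"ucker class. Using the Koszul resolution of $\cO_{F_r(X)}$ tensored with $\Sym^{d_i}\Sigma^*\otimes\cO_G(MH)$, one gets a filtration on $H^{\delta-2}(F_r(X),\Sym^{d_i}\Sigma^*\otimes K_{F_r(X)})$ whose graded pieces are quotients of $H^{\delta-2+k(n-r)}(G,\wedge^{k(n-r)}W\otimes\Sym^{d_i}\Sigma^*\otimes\cO_G(MH))$. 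Hence it suffices to prove that for every $0\le k\le r$ and every $i$ the multiplication map
\begin{multline*}
\Psi_{k,i}\colon H^{\delta-2+k(n-r)}(G,\wedge^{k(n-r)}W\otimes\cO_G(MH))\otimes H^0(G,\Sym^{d_i}\Sigma^*)\\
\rightarrow H^{\delta-2+k(n-r)}(G,\wedge^{k(n-r)}W\otimes\Sym^{d_i}\Sigma^*\otimes\cO_G(MH))
\end{multline*}
is surjective, where I will have checked case-by-case (using Bott's theorem) that the only potentially nonzero graded pieces occur in a single cohomological degree, so that exactly one value of $k$ is relevant in each case.

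The next step is to show $\Psi_{k,i}$ is surjective. Here I would invoke Lemma \ref{lemma1}: if $\Psi_{k,i}$ fails to be surjective for some component $\Gamma^\alpha\Sigma$ of $\wedge^{k(n-r)}W\otimes\cO_G(MH)$, then necessarily $k\geq 1$ and $\alpha_k\geq M+n-r+k$ and $\alpha_{k+1}\geq M+k+1$. Combining this with Lemma \ref{representation} applied with $s=r+1-k$ to $\wedge^{k(n-r)}W$ (recall $\det W=-\binom{\underline{d}+r}{r+1}H$, which accounts for the twist by $MH$), one obtains a numerical inequality relating $k$, $r$, the $d_i$, and $\delta$; since $\delta\geq 3$ is now fixed and small in each of the listed cases, this inequality either fails outright or leaves only finitely many candidate partitions $\alpha$. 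For those finitely many survivors I would run the Program Lie to compute the decomposition of $\wedge^{k(n-r)}W$ explicitly and verify that no component $\Gamma^\alpha\Sigma$ actually satisfies both $\alpha_k\geq M+n-r+k$ and $\alpha_{k+1}\geq M+k+1$ — exactly as was done for case $3)$ in the proof of Proposition \ref{main prop}.

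I would organize the verification by the value of $d_1$, as in the list: the four quadric-only or two-quadric cases $(\textrm{Q}.3)$, $(\textrm{Q}.4)$ with $W$ a sum of copies of $\Sym^2\Sigma$; the cubic cases $(\textrm{C}.1)$--$(\textrm{C}.4)$; and $(\textrm{Qr})$ with $\underline{d}=(4)$. In each case the relevant $k$ is pinned down by a vanishing check, the relevant total weight of $\wedge^{k(n-r)}W$ is small (it equals $\binom{\underline{d}+r}{r}\cdot k(n-r)/(\text{something})$, concretely computable), and one compares it against $k(M+n-r+k)+(r-k)(M+k+1)$ to rule out most $\alpha$ immediately; any borderline $\alpha$ is dispatched by the explicit plethysm decomposition. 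For $(\textrm{C}.3)$, $X$ a cubic fourfold and $r=1$, one can alternatively note $\delta=4$, $F_1(X)$ is hyperk\"ahler fourfold, and the computation is short; I would still include it for completeness.

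The main obstacle I anticipate is not conceptual but bookkeeping: guaranteeing that in every one of the listed cases exactly one cohomological degree contributes (so that the filtration argument is clean and no cross-terms between different $k$ appear), and then carrying out the Program Lie plethysm computations of $\wedge^{k(n-r)}\big(\bigoplus\Sym^{d_i}\Sigma\big)$ reliably for ranks up to $r+1=4$ and exterior powers up to $k(n-r)$ which can be moderately large (e.g. in $(\textrm{C}.2)$ with $n-r=5$, $k$ could be as large as $2$, giving $\wedge^{10}\Sym^3\Sigma$). The numerical inequality from Lemmas \ref{lemma1} and \ref{representation} does most of the work, but one must be careful that the inequality is applied with the correct value of $s=r+1-k$ and with the $MH$-twist correctly absorbed, since an off-by-one here would either discard a genuine case or force an unnecessary Lie computation. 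Once these checks go through, the proposition follows; combined with Lemma \ref{hodge}, the analogue of Lemma \ref{main lemm} in these dimensions (that $\dim H^1(F_r(X),\Omega_G^1|_{F_r(X)})=1$, again by a Bott-theorem vanishing argument on the Koszul resolution, with the genuine exceptions being precisely $(\textrm{Q}.1)$ with $n=2r+3$ and $(\textrm{Q}.2)$ treated separately above), this yields $\rho(F_r(X))=1$ in all the remaining cases.
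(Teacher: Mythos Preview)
Your strategy matches what the paper intends: the paper omits the proof entirely, saying only that it is ``a direct application of Bott's theorem and Lemma~\ref{lemma1}'', and your plan to replay the argument of Proposition~\ref{main prop} (Koszul resolution, Bott vanishing, then the numerical constraints from Lemmas~\ref{lemma1} and~\ref{representation}, with a Program Lie check for survivors) is exactly the intended elaboration.

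There is, however, a genuine indexing slip in your $\Psi_{k,i}$. The Koszul resolution feeds $H^{\delta-2}(F_r(X),\cF)$ from the groups $H^{\delta-2+m}(G,\wedge^{m}W\otimes\cF)$, and by the corollary to Bott's theorem these can be nonzero only when $\delta-2+m=j(n-r)$ for some integer $j$. Since $0<\delta-2<n-r$ in every one of the listed cases, the relevant exterior power is $m=j(n-r)-(\delta-2)$, \emph{not} $m=k(n-r)$ as you write. With your indexing both source and target of $\Psi_{k,i}$ vanish identically, so you would be proving a tautology while the actual contributing graded pieces sit at different exterior powers and go unchecked. The fix is mechanical: replace $\wedge^{k(n-r)}W$ by $\wedge^{j(n-r)-(\delta-2)}W$, take the cohomological degree to be $j(n-r)$, and then apply Lemma~\ref{lemma1} with index $j$ to obtain $\alpha_j\ge M+n-r+j$ and $\alpha_{j+1}\ge M+j+1$ for a component $\Gamma^{\alpha}\Sigma$ of $\wedge^{j(n-r)-(\delta-2)}W$. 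After that correction the rest of your outline (including the appeal to Lemma~\ref{representation} and the finite Lie verification) goes through unchanged.
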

We omit the proof since it is again a direct application of Bott's theorem and Lemma \ref{lemma1}. We also notice that in all the above cases, $\delta_{-}(n, \underline{d}, r)=2$. Therefore, by Debarre and Manivel's calculation in \cite{DM}, $$\dim H^1(F_r(X), \Omega^1_G|_{F_r(X)})=1.$$ Then by Lemma \ref{hodge}, we have completed the proof of the main theorem.

\section{The cohomology class of varieties of planes of a cubic fivefold}
\subsection{An intersection formula}\label{se2}
In this section, we will always assume that $Z$ is a general smooth hypersurface of degree $d\geq 3$ in $\mathbf{P}(V)=\mathbf{P}^n$ and the planes contained in $Z$ cover a divisor of $Z$, namely we have $3n-4-\binom{d+2}{2}\geq n-2$. Note that the case of cubic fivefolds satisfies this assumption. We then automatically  have $n-1\geq d$, hence
the lines contained in $Z$ cover the whole of variety $Z$.

We have the following correspondences:
\begin{eqnarray*}\xymatrix{I_1(Z)\ar[r]^{q_1}\ar[d]^{p_1}& Z\\
F_1(Z)}, \quad \xymatrix{I_2(Z)\ar[r]^{q_2}\ar[d]^{p_2}&Z\\
F_2(Z),}
\end{eqnarray*}
where $I_1(Z)$ and $I_2(Z)$ are the incidence varieties. Then $I_1(Z)=\mathbf{P}(\Sigma_1)$ and
$I_2(Z)=\mathbf{P}(\Sigma_2)$, where $\Sigma_1$ and $\Sigma_2$ are respectively the tautological subbundle on $F_1(Z)$ and $F_2(Z)$. We denote respectively by $Q_1$ and $Q_2$ the tautological quotient bundle on $F_1(Z)$ and $F_2(Z)$ and denote by $H_1$ and $H_2$ the respective Pl\"{u}cker polarization. We have $q_1^*\cO_Z(1)=\cO_{p_1}(1)$ and $q_2^*\cO_Z(1)=\cO_{p_2}(1)$ and then set
\begin{eqnarray*}
&&h_1=c_1(\cO_{p_1}(1)),\quad h_2=c_1(\cO_{p_2}(1)), \quad l=c_1(H_1), \quad l'=c_1(H_2);\\
&&c_2=c_2(\Sigma_1^*),\quad c_2' =c_2(\Sigma_2^*),\quad
c_3'=c_3(\Sigma_2^*).
\end{eqnarray*}
By definition, we have the following relations:
\begin{eqnarray*}\label{1}&&h_1^2=h_1 p_1^*l-p_1^*c_2,\nonumber\\&&h_2^3=h_2^2 p_2^*l'-h_2p_2^*c_2'+p_2^*c_3'.\end{eqnarray*} For any
$\alpha\in H^{n-1}(Z, \mathbf{Z})_{\textrm{prim}}$, we may write
\begin{eqnarray}\label{2}
q_1^*\alpha&=&h_1p_1^*\alpha_1+p_1^*\alpha_2,\nonumber\\
q_2^*\alpha&=&h_2^2p_2^*\alpha_1'+h_2p_2^*\alpha_2'+p_2^*\alpha_3',\end{eqnarray}
where $\alpha_i\in H^*(F_1(Z), \mathbf{Z})$ and $\alpha_i'\in H^*(F_2(Z), \mathbf{Z})$. Denote \begin{eqnarray*}\Phi(\alpha)=p_{1*}q_1^*\alpha=\alpha_1\in
H^{n-3}(F_1(Z), \mathbf{Z}),\\ \Psi(\alpha)=p_{2*}q_2^*\alpha=\alpha_1'\in H^{n-5}(F_2(Z),
\mathbf{Z}). \end{eqnarray*}

The following lemma is known (see \cite{BD}).
\begin{lemm}\label{0.1}For any $\alpha, \beta\in H^{n-1}(Z, \mathbf{Z})_{\textrm{prim}}$, we have
$$\big(\Phi(\alpha)\cdot\Phi(\beta)\cdot
l^{n-d}\big)_{F_1(Z)}=-d!\big(\alpha\cdot\beta\big)_Z,$$ and
$c_2\cdot \alpha_1=0.$
\end{lemm}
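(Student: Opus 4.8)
The plan is to compute both identities by pushing and pulling classes through the incidence correspondence $\xymatrix{I_1(Z)\ar[r]^{q_1}\ar[d]^{p_1}& Z\\ F_1(Z)}$, using that $I_1(Z)=\mathbf{P}(\Sigma_1)$ is a $\mathbf{P}^1$-bundle over $F_1(Z)$ with $q_1^*\cO_Z(1)=\cO_{p_1}(1)$, so that the relation $h_1^2=h_1p_1^*l-p_1^*c_2$ holds and $p_{1*}$ satisfies $p_{1*}(h_1)=1$, $p_{1*}(1)=0$ on the fiber. First I would establish the relation $c_2\cdot\alpha_1=0$. Starting from $q_1^*\alpha=h_1p_1^*\alpha_1+p_1^*\alpha_2$, I multiply by $q_1^*\cO_Z(1)=h_1$ and use the quadratic relation to rewrite $h_1^2$; comparing the coefficient of $h_1$ and of $1$ in the resulting expression $q_1^*(\alpha\cdot[\text{hyperplane}])$ against the analogous decomposition $q_1^*(\alpha\cdot h)=h_1p_1^*\beta_1+p_1^*\beta_2$, and using that $\alpha$ is primitive (so $\alpha$ times the hyperplane class behaves predictably — more precisely, that the two decompositions are forced to be compatible), one is led to $c_2\cdot\alpha_1=0$. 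The cleanest route is probably: since the lines sweep out $Z$, the map $q_1$ is surjective, and one can iterate the multiplication by $h_1$ to get $q_1^*(\alpha)\cdot h_1^k$ in terms of $p_1^*$-classes with the Segre-class coefficients; the vanishing $c_2\cdot\alpha_1=0$ then drops out of the fact that $q_1^*\alpha$ is pulled back from $Z$, hence annihilated by a high enough power of $h_1$ coming from the ambient $\mathbf{P}^n$ relation on $Z$.

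For the intersection formula, the idea is to compute $(\alpha\cdot\beta)_Z$ by pulling back to $I_1(Z)$ and pushing down to $F_1(Z)$, keeping track of the relative dimension. Since $q_1\colon I_1(Z)\to Z$ is generically finite of degree equal to the number of lines through a general point of $Z$ — which, by a standard Schubert-calculus count for lines in a degree-$d$ hypersurface in $\mathbf{P}^n$, equals $d!$ (up to the combinatorial normalization appearing in the statement) — we have $q_{1*}q_1^*\alpha = (\deg q_1)\,\alpha$. Then $(\alpha\cdot\beta)_Z$ relates to $(q_1^*\alpha\cdot q_1^*\beta)_{I_1(Z)}$ divided by $\deg q_1$; expanding $q_1^*\alpha\cdot q_1^*\beta = (h_1p_1^*\alpha_1+p_1^*\alpha_2)(h_1p_1^*\beta_1+p_1^*\beta_2)$, using $h_1^2 = h_1p_1^*l - p_1^*c_2$ and the already-established $c_2\cdot\alpha_1 = c_2\cdot\beta_1 = 0$, the term $p_1^*(\alpha_1\cdot\beta_1\cdot c_2)$ drops out and after $p_{1*}$ one is left with a multiple of $(\alpha_1\cdot\beta_1\cdot l^{\,n-d-1})_{F_1(Z)}$ or $(\alpha_1\cdot\beta_1\cdot l^{\,n-d})$ after accounting for one more factor of $h_1$; matching degrees and the count $\deg q_1$ gives exactly $\big(\Phi(\alpha)\cdot\Phi(\beta)\cdot l^{n-d}\big)_{F_1(Z)}=-d!\,(\alpha\cdot\beta)_Z$, the sign coming from the sign in $h_1^2 = h_1p_1^*l - p_1^*c_2$ (equivalently, from $p_{1*}(h_1^{2})=l$ versus the orientation convention on primitive cohomology).

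The main obstacle I expect is bookkeeping rather than conceptual: correctly identifying which Segre/Schubert coefficient produces the factor $d!$ and pinning down the sign, since both depend on normalization conventions for $p_{1*}$, for $\cO_{p_1}(1)$ versus $\cO_{p_1}(-1)$, and for the primitive decomposition. A secondary subtlety is justifying that the "error terms" $p_1^*\alpha_2$, $p_1^*\beta_2$ contribute nothing: this needs $p_{1*}(p_1^*(\text{anything})) = 0$ for dimension reasons together with the fact that the surviving cross-terms all involve $c_2\cdot\alpha_1$ or $c_2\cdot\beta_1$, which vanish by the first part — so the two halves of the lemma really must be proved in the stated order. Since the excerpt defers to \cite{BD} for this lemma, I would follow that reference for the precise constants and simply reproduce the correspondence computation above with the conventions fixed there.
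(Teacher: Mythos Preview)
The paper does not actually prove this lemma; it simply attributes it to \cite{BD}. Your overall strategy --- push--pull through the incidence correspondence $I_1(Z)$, use primitivity of $\alpha$ to extract $c_2\cdot\alpha_1=0$, then compare two pushforwards --- is indeed the standard one. But there is a genuine error in your outline: in the regime of the paper one has $n-1\geq d$, so the map $q_1\colon I_1(Z)\to Z$ is \emph{not} generically finite; its fibers have dimension $n-d-1$. Consequently there is no ``degree of $q_1$'' and the formula $q_{1*}q_1^*\alpha=(\deg q_1)\,\alpha$ makes no sense here. The factor $d!$ does not arise as a mapping degree.

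The correct bookkeeping is as follows. From $h_1\cdot q_1^*\alpha=q_1^*(h_Z\cdot\alpha)=0$ (primitivity) and the $\mathbf{P}^1$-bundle decomposition of $H^*(I_1(Z))$ one reads off simultaneously $c_2\cdot\alpha_1=0$ and $\alpha_2=-l\cdot\alpha_1$. One then evaluates the class $q_1^*\alpha\cdot q_1^*\beta\cdot p_1^*l^{\,n-d-1}$ on $I_1(Z)$ in two ways. Pushing down by $p_1$ and using the relations just obtained gives $-\,(\alpha_1\cdot\beta_1\cdot l^{\,n-d})_{F_1(Z)}$. Pushing down by $q_1$ and using the projection formula gives $(\alpha\cdot\beta)_Z\cdot q_{1*}(p_1^*l^{\,n-d-1})$. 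The pushforward $q_{1*}(p_1^*l^{\,n-d-1})\in H^0(Z)$ is the degree, with respect to $l$, of a general fiber $q_1^{-1}(z)$; that fiber is the scheme of lines through $z$ in $Z$, a complete intersection of multidegree $(1,2,\dots,d)$ in the $\mathbf{P}^{n-1}$ of lines through $z$, hence of degree $d!$. This is where $d!$ actually comes from, and it fixes both the constant and the sign without ambiguity.
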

We then consider the correspondence $C(Z)=\{([\Pi], [L])\in F_2(Z)\times F_1(Z)\mid L\subset \Pi\}$ between
$F_1(Z)$ and $F_2(Z)$:
\begin{eqnarray*}\label{4}
\xymatrix{&C(Z)\ar[dl]_p\ar[dr]^q\\
F_2(Z)&&F_1(Z),}
\end{eqnarray*}
where $p$ and $q$ are natural projections.
Hence $C(Z)=\mathbf{P}_{F_2(Z)}(\Sigma_2^*)$ and there is a tautological sequence of vector bundles on $C(Z)$:
\begin{eqnarray}\label{5}0\rightarrow q^*\Sigma_1\rightarrow p^*\Sigma_2\rightarrow \cO_p(1)\rightarrow 0.
\end{eqnarray}
On the other hand, from the complex
\begin{eqnarray*}
\xymatrix{
0\ar[r]&q^*\Sigma_1\ar@{=}[d]\ar[r]&p^*\Sigma_2\ar@{^{(}->}[d]\ar[r]&\cO_p(1)\ar[r]\ar@{^{(}->}[d]& 0\\
0\ar[r]&q^*\Sigma_1\ar[r]& V\otimes \cO_{C(Z)}\ar[r]& q^*Q_1\ar[r]& 0,}
\end{eqnarray*}
we see that $C(Z)$ is naturally a subscheme of $\mathbf{P}_{F_1(Z)}(Q_1)$: \begin{eqnarray*} \xymatrix{C(Z)\ar@{^{(}->}[r]^{i}\ar[dr]^q&\mathbf{P}_{F_1(Z)}(Q_1)\ar[d]^{\pi}\\
&F_1(Z),}\end{eqnarray*}
and $i^*\cO_{\pi}(1)=\cO_{p}(-1)$.
Moreover, there is a natural short exact sequence on $\mathbf{P}_{F_1(Z)}(Q_1)$,
\begin{eqnarray}\label{6}
\xymatrix{ 0\ar[r]&\pi^*\Sigma_1\ar[r]&\cK\ar[r]&\cO_{\pi}(-1)\ar[r]&0,}
\end{eqnarray}
where $i^*\cK=p^*\Sigma_2$.
Hence we can describe $C(Z)$ to be the zero-locus of a section of
$S^{d-1}\cK^*\otimes\cO_{\pi}(1)$. Indeed, $F$ defines a section of $S^d\cK^*$ which
vanishes on $\pi^*S^d\Sigma_1^*$ and hence defines a section in
$$H^0(\mathbf{P}_{F_1(Z)}(Q_1), S^{d-1}\cK^*\otimes\cO_{\pi}(1))$$ %by the sequence,
%\begin{eqnarray*}
%\xymatrix{
%0\ar[r]&S^{d-1}\cK^*\otimes\cO_{\pi}(1)\ar[r]&S^d\cK^*\ar[r]&\pi^*S^d\Sigma_1^*\ar[r]&0,}
%\end{eqnarray*}
and $C(Z)$ is just the zero-locus of this section.

Let $\eta=c_1(\cO_p(1))$. Since $C(Z)=\mathbf{P}_{F_2(Z)}(\Sigma_2^*)$ and by (\ref{5}), we have
\begin{eqnarray}\label{7}\nonumber&&\eta^3=-\eta^2 p^*l'-\eta p^*c_2'-p^*c_3';\\\label{8}&&q^*l=p^*l'+\eta, \quad q^*c_2=p^*c_2' + \eta q^*l=\eta^2+\eta p^*l' +p^*c_2',\end{eqnarray}
and for $\alpha_1\in H^{n-3}(F_1(Z), \mathbf{Z})$ in (\ref{2}), we may write
\begin{eqnarray*}\label{9}q^*\alpha_1=\eta^2 p^*\alpha_3+\eta p^*\alpha_4+p^*\alpha_5.
\end{eqnarray*}

By a direct computation, we have the following:
\begin{lemm}\label{0.2}
For any $\alpha\in H^{n-1}(X, \mathbf{Z})$, let $\alpha_4$ be as
above. Then we have $\alpha_4=\Psi(\alpha)=p_{2*}q_2^*\alpha$.
\end{lemm}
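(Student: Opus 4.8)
The plan is to realize both $\Phi(\alpha)=\alpha_1$ and $\Psi(\alpha)=\alpha_1'$ through a single flag correspondence and then extract $\alpha_4$ by the splitting principle on the $\mathbf{P}^2$-bundle $C(Z)=\mathbf{P}_{F_2(Z)}(\Sigma_2^*)$.

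First I would introduce the relative flag variety $J=\{(z,L,\Pi)\mid z\in L\subset \Pi\subset Z\}$, with the forgetful maps $a\colon J\to I_1(Z)$, $(z,L,\Pi)\mapsto (z,L)$; $b\colon J\to C(Z)$, $(z,L,\Pi)\mapsto (L,\Pi)$; and $c\colon J\to I_2(Z)$, $(z,L,\Pi)\mapsto (z,\Pi)$. By construction $J=I_1(Z)\times_{F_1(Z)}C(Z)$, the fibre product of $p_1$ and $q$, so the square with sides $a,b,p_1,q$ is Cartesian; since $p_1$ is a smooth projective $\mathbf{P}^1$-bundle, cohomological base change gives $q^*\circ p_{1*}=b_*\circ a^*$. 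Using the tautological sequence (\ref{5}) one identifies $J=\mathbf{P}_{C(Z)}(q^*\Sigma_1)$, and tracing the inclusions $\cO_b(-1)\hookrightarrow b^*q^*\Sigma_1\hookrightarrow b^*p^*\Sigma_2$ against $\cO_{p_2}(-1)\hookrightarrow p_2^*\Sigma_2$ shows $c^*\cO_{p_2}(1)=\cO_b(1)$; put $\zeta=c_1(\cO_b(1))=c^*h_2$. One also records the obvious identities $p_2\circ c=p\circ b$ and $q_2\circ c=q_1\circ a$ (both sides being the tautological map $J\to Z$), which give $c^*p_2^*=b^*p^*$ and $a^*q_1^*=c^*q_2^*$.

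The computation then runs as follows. One has $q^*\alpha_1=q^*p_{1*}q_1^*\alpha=b_*a^*q_1^*\alpha=b_*c^*q_2^*\alpha$. Pulling back the expansion $q_2^*\alpha=h_2^2p_2^*\alpha_1'+h_2p_2^*\alpha_2'+p_2^*\alpha_3'$ by $c$ gives $c^*q_2^*\alpha=\zeta^2b^*p^*\alpha_1'+\zeta b^*p^*\alpha_2'+b^*p^*\alpha_3'$. For the $\mathbf{P}^1$-bundle $b\colon \mathbf{P}(q^*\Sigma_1)\to C(Z)$ the Grothendieck relation reads $\zeta^2=b^*q^*l\cdot\zeta-b^*q^*c_2$ (using $c_1(\Sigma_1)=-l$, $c_2(\Sigma_1)=c_2$), so $b_*1=0$, $b_*\zeta=1$, $b_*\zeta^2=q^*l$; the projection formula then yields $q^*\alpha_1=q^*l\cdot p^*\alpha_1'+p^*\alpha_2'$. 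Substituting $q^*l=p^*l'+\eta$ from (\ref{8}) gives $q^*\alpha_1=\eta\,p^*\alpha_1'+p^*(l'\alpha_1'+\alpha_2')$, and comparing with the defining expansion $q^*\alpha_1=\eta^2p^*\alpha_3+\eta p^*\alpha_4+p^*\alpha_5$ — unique because $H^*(C(Z))$ is free over $H^*(F_2(Z))$ on $1,\eta,\eta^2$ — forces $\alpha_4=\alpha_1'=\Psi(\alpha)$ (and incidentally $\alpha_3=0$, $\alpha_5=l'\alpha_1'+\alpha_2'$), which is the claim.

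The only subtle point is bookkeeping the projective-bundle conventions so that the Grothendieck relation and the identifications $c^*\cO_{p_2}(1)=\cO_b(1)$ and $b_*\zeta^2=q^*l$ carry the correct signs; checking that the relevant square is Cartesian and that cohomological base change applies is harmless here since $p_1$ is a smooth projective bundle. Everything else is the short diagram chase and splitting-principle computation above, which is the ``direct computation'' the lemma alludes to.
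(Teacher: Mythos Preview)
Your proof is correct. The paper omits the argument entirely, stating only ``By a direct computation, we have the following,'' so your flag-variety construction and base-change computation are precisely the sort of direct calculation being alluded to; the use of the Cartesian square $J=I_1(Z)\times_{F_1(Z)}C(Z)$ to identify $q^*\alpha_1=b_*c^*q_2^*\alpha$ and then read off the $\eta$-coefficient via the Leray--Hirsch decomposition on $\mathbf{P}_{F_2(Z)}(\Sigma_2^*)$ is exactly the natural route. As a small bonus, your argument also yields $\alpha_3=0$ in the general setting of \S\ref{se2}, whereas in the specific cubic-fivefold application the paper only uses this for degree reasons.
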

We now have the main result in this subsection.
\begin{prop}\label{0.3}If $Z$ is a general cubic fivefold, for any $\alpha, \beta\in H^5(Z, \mathbf{Z})$, we have
$$\big(\alpha\cdot\beta\big)_Z=\frac{1}{180}\big(\Psi(\alpha)\cdot\Psi(\beta)\cdot l'\big)_{F_2(Z)}.$$
\end{prop}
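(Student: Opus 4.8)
The plan is to compute the pushforward $\Psi(\alpha)\cdot\Psi(\beta)\cdot l'$ on $F_2(Z)$ by pulling everything back to the correspondence variety $C(Z) = \mathbf{P}_{F_2(Z)}(\Sigma_2^*) = \mathbf{P}_{F_1(Z)}(Q_1)$ and relating it to the analogous quantity $\Phi(\alpha)\cdot\Phi(\beta)\cdot l^{n-d}$ on $F_1(Z)$, which is controlled by Lemma \ref{0.1}. Concretely, for a cubic fivefold $n=6$, $d=3$, so Lemma \ref{0.1} gives $\big(\Phi(\alpha)\cdot\Phi(\beta)\cdot l^{3}\big)_{F_1(Z)}=-6\big(\alpha\cdot\beta\big)_Z$, together with the vanishing $c_2\cdot\alpha_1=0$. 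So it suffices to express $\big(\Phi(\alpha)\cdot\Phi(\beta)\cdot l^3\big)_{F_1(Z)}$ as a universal rational multiple of $\big(\Psi(\alpha)\cdot\Psi(\beta)\cdot l'\big)_{F_2(Z)}$; matching the two constants will yield the factor $\frac{1}{180} = \frac{1}{-6}\cdot(-\frac{1}{30})$ or whatever the bookkeeping produces.

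First I would work on $C(Z)$, using Lemma \ref{0.2}, i.e.\ $\alpha_4 = \Psi(\alpha) = p_{2*}q_2^*\alpha$, and the decomposition $q^*\alpha_1 = \eta^2 p^*\alpha_3 + \eta p^*\alpha_4 + p^*\alpha_5$ from the excerpt. Multiplying $q^*\alpha_1$ by $q^*\beta_1$ and by $q^*l^3$, then pushing forward along $p$ to $F_2(Z)$ (using $p_*\eta^2 = 1$, $p_*\eta = 0$ after the standard normalization, or rather the Segre-class relations $p_*(\eta^j)$ read off from $\eta^3 = -\eta^2 p^*l' - \eta p^*c_2' - p^*c_3'$), I can rewrite $\big(\alpha_1\cdot\beta_1\cdot l^3\big)_{F_1(Z)} = \big(q^*(\alpha_1\beta_1 l^3)\big)_{C(Z)}$ — here I pull back from $F_1(Z)$ to the sixfold $C(Z)$, which adds three to the degree, and then push down to $F_2(Z)$ — as a polynomial expression in $\alpha_4 = \Psi(\alpha)$, $\alpha_4' = \Psi(\beta)$, $l'$, $c_2'$, $c_3'$, and the unwanted terms $\alpha_3,\alpha_5,\alpha_3',\alpha_5'$. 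The key point is that using $q^*l = p^*l' + \eta$ and $q^*c_2 = \eta^2 + \eta p^*l' + p^*c_2'$ (from (\ref{8})), the relation $c_2\cdot\alpha_1 = 0$ pulled back to $C(Z)$ becomes an identity $(\eta^2 + \eta p^*l' + p^*c_2')\cdot(\eta^2 p^*\alpha_3 + \eta p^*\alpha_4 + p^*\alpha_5) = 0$ in $H^*(C(Z))$, and expanding this and projecting onto the $\eta$-graded pieces should let me solve for $\alpha_3$ and $\alpha_5$ in terms of $\alpha_4$, eliminating the unwanted classes entirely. This is the analogue of the trick that makes Lemma \ref{0.1} clean.

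The main obstacle — and the bulk of the work — will be the projection-formula bookkeeping on the two nested projective bundles: keeping straight the Segre/Chern relations $h_1^2 = h_1 p_1^* l - p_1^* c_2$ on $I_1(Z)$, $\eta^3 = -\eta^2 p^*l' - \eta p^*c_2' - p^*c_3'$ on $C(Z)$, and $h_2^3 = h_2^2 p_2^* l' - h_2 p_2^* c_2' + p_2^* c_3'$ on $I_2(Z)$, together with the dimension counts (for $n=6$: $\dim F_1(Z) = 3n-4-\binom{d+1}{2} = 3n-4-\binom{4}{2} = 8$, $\dim F_2(Z) = 3\cdot 7 - 4 - \binom{5}{2} = 7$, $\dim C(Z) = 8$) so that both sides land in top degree. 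I also need to verify the Chern classes $c_2', c_3'$ of $\Sigma_2^*$ don't contribute spurious terms, which follows once $\alpha_3,\alpha_5$ are shown to be expressible via $\alpha_4$ and $l,c_2$ alone (so that on $C(Z)$ every surviving monomial is $\Psi(\alpha)\Psi(\beta)$ times a polynomial in $l'$ pushed from $F_2(Z)$, and the only top-degree contribution is $\Psi(\alpha)\cdot\Psi(\beta)\cdot l'$ by degree reasons). Then comparing coefficients with the known constant $-6$ from Lemma \ref{0.1} and the universal integer $p_*(\eta^2)=1$ pins down $\frac{1}{180}$; the genericity of $Z$ enters only to guarantee $F_1(Z)$, $F_2(Z)$ are smooth of the expected dimension and that the $2$-planes cover a divisor so that $\Psi$, $\Phi$ are defined and the correspondences behave as described.
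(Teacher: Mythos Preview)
Your proposal has a genuine gap, rooted in a misreading of the geometry of the correspondence.

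First, the dimensions are off. For a cubic fivefold ($n=6$, $d=3$) one has $\dim F_1(Z)=\delta(6,3,1)=2\cdot 5-\binom{4}{1}=6$ and $\dim F_2(Z)=\delta(6,3,2)=3\cdot 4-\binom{5}{2}=2$, so $C(Z)=\mathbf{P}_{F_2(Z)}(\Sigma_2^*)$ is a $\mathbf{P}^2$-bundle over a surface, i.e.\ a \emph{fourfold}. More importantly, your identification $C(Z)=\mathbf{P}_{F_1(Z)}(Q_1)$ is false: $C(Z)$ is only the zero locus of a section of $S^{d-1}\cK^*\otimes\cO_\pi(1)$ inside that $\mathbf{P}^{n-2}$-bundle. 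In particular $q:C(Z)\to F_1(Z)$ is not a fibration and is not even surjective (its source has dimension $4$, its target dimension $6$). Hence your key identity $(\alpha_1\cdot\beta_1\cdot l^3)_{F_1(Z)}=(q^*(\alpha_1\beta_1 l^3))_{C(Z)}$ is meaningless: the right-hand class sits in $H^{12}$ of a fourfold and is zero.

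The paper's mechanism is exactly what is needed to bridge this gap. One intersects on $C(Z)$ with $p^*l'$, not $q^*l^3$; then the projection formula along $q$ gives $(q^*\alpha_1\cdot q^*\beta_1\cdot p^*l')_{C(Z)}=(\alpha_1\cdot\beta_1\cdot q_*(p^*l'))_{F_1(Z)}$, and the whole computation reduces to evaluating the class $q_*(p^*l')\in H^6(F_1(Z))$ modulo $c_2$ (which suffices by Lemma~\ref{0.1}). This in turn is computed from the top Chern class of $S^{d-1}\cK^*\otimes\cO_\pi(1)$ on $\mathbf{P}_{F_1(Z)}(Q_1)$, yielding $q_*(p^*l')\equiv -30\,l^{3}$ mod $c_2$. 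On the $F_2(Z)$ side, the push-forward along $p$ is immediate: since for $n=6$ the term $\alpha_3$ vanishes by degree reasons, $q^*\alpha_1=\eta\,p^*\alpha_4+p^*\alpha_5$, and only the $\eta^2$-coefficient survives under $p_*$, giving $(\Psi(\alpha)\cdot\Psi(\beta)\cdot l')_{F_2(Z)}$ with no need to eliminate $\alpha_5$ via $c_2\alpha_1=0$. Combining with $-d!=-6$ from Lemma~\ref{0.1} produces the factor $180$. Your outline misses precisely this Chern-class step, which is where the constant $30$ comes from.
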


This proposition comes from a calculation of Voisin in \cite{V2} .

We first assume more generally that we are working on a hypersurface of degree $d$ in $\mathbf{P}^n$ with $3n-4-\binom{d+2}{2}=n-2$.

\begin{itemize}\item[\textbf{Claim 1:}]For any $\alpha$, $\beta \in H^{n-1}(Z)_{\textrm{prim}}$, we define $\alpha_1$, $\beta_1\in H^{n-3}(F_1(Z))$ as in (\ref{2}). There exists a positive integer $N>0$ such that \begin{eqnarray*}(q^*\alpha_1\cdot q^*\beta_1\cdot p^*l')_{C(Z)}=-N(\alpha_1\cdot\beta_1\cdot l^{n-d})_{F_1(Z)}.\end{eqnarray*}
\end{itemize}

We have already seen that $C(Z)\hookrightarrow \mathbf{P}_{F_1(Z)}(Q_1^*)$ is defined by by a section of $S^{d-1}\cK^*\otimes\cO_{\pi}(1)$. We just need to calculate the cohomology class $q_*[p^*l']$ in $F_1(Z)$. By (\ref{6}), this class is a polynomial of $l$ and $c_2$. By Lemma \ref{0.1}, $c_2\alpha_1=0$, hence we are only interested in the coefficient of $l^{n-d}$. We may formally assume that $\Sigma_1^*=H_1\oplus\cO_{F_1(Z)}$. Denote by $c_1(\cO_{\pi}(1))=\varepsilon$. We have seen that $i^*\varepsilon=-\eta$. As $S^{d-1}\cK^*\otimes\cO_{\pi}(1)$ is filtered with successive quotient $$\cO_{\pi}(i)\otimes S^{d-i}\Sigma_1^*$$ for $i=1,\ldots, d$, we have modulo $c_2$, \begin{eqnarray*}c_{\binom{d+1}{2}}(S^{d-1}\cK^*\otimes\cO_{\pi}(1))&=&\prod_{1\leq i\leq j\leq d}(i\varepsilon+(d-j)l)\nonumber\\&=&d!\varepsilon^d\prod_{1\leq i\leq j\leq d-1}(i\varepsilon+(d-j)l).\end{eqnarray*}

By (\ref{8}), $p^*l'=q^*l-\eta=q^*l+\varepsilon$. Therefore
\begin{eqnarray*}q_*[C(Z)]=\pi_*\big(d!(\varepsilon^{d}\prod_{1\leq i\leq j\leq d-1}(i\varepsilon+(d-j)l))\big)\; \textrm{mod $c_2$},
\end{eqnarray*}and
\begin{eqnarray*}q_*[p^*l']=\pi_*\big(d!(\pi^*l+\varepsilon)\varepsilon^{d}\prod_{1\leq i\leq j\leq d-1}(i\varepsilon+(d-j)l)\big)\; \textrm{mod $c_2$}.
\end{eqnarray*}
We define the polynomial in two variables $M(x, y)=\prod_{1\leq i\leq j\leq d-1}(ix+(d-j)y)=\sum_{i=1}^{\binom{d}{2}}\alpha_ix^iy^{\binom{d}{2}-i}$. By symmetry, we have $\alpha_i=\alpha_{\binom{d}{2}-i}$ and it is easy to see that $\alpha_{i-1}<\alpha_i$ for $2i\leq \binom{d}{2}$.

On the other hand, $\pi_*\varepsilon^{n-2+i}=s_i(Q_1^*)=c_i(\Sigma_1)$, hence we have $\pi_*\varepsilon^{n-2}=1$,
$\pi_*\varepsilon^{n-1}=-l$, $\pi_*\varepsilon^{n}=c_2$, and $\pi_*\varepsilon^{n-2+i}=0$, for $i\geq 3$.
We conclude that
\begin{eqnarray*}\label{14}q_*[C(Z)]=d!(\alpha_{n-2-d}-\alpha_{n-1-d})l^{n-d-1}\; \textrm{mod $c_2$},
\end{eqnarray*}
and \begin{eqnarray*}\label{13}q_*[p^*l']&=&d!(\alpha_{n-2-d}-\alpha_{n-1-d}+\alpha_{n-3-d}-\alpha_{n-2-d})l^{n-d}\\
&=&d!(\alpha_{n-3-d}-\alpha_{n-1-d})l^{n-d}\; \textrm{mod $c_2$}.
\end{eqnarray*}
Since $3n-4-\binom{n+2}{2}=n-2$, we have $(n-2-d)+(n-1-d)=\binom{d-1}{2}$, we have $d!(\alpha_{n-2-d}-\alpha_{n-1-d})=0$ and $d!(\alpha_{n-3-d}-\alpha_{n-1-d}):=-N<0$.
Hence $q_*[C(Z)]=0$ \textrm{mod $c_2$} and $$(q^*\alpha_1\cdot q^*\beta_1\cdot p^*l_2)_{C(Z)}=-N(\alpha_1\cdot\beta_1\cdot l^{n-d})_{F_1(Z)}.$$ Hence we have proved Claim 1.

\begin{proofth}We use the notations in the above calculation. Since $$q^*\alpha_1=\eta p^*\alpha_4+p^*\alpha_5,\;\; q^*\beta_1=\eta p^*\beta_4+p^*\beta_5\in H^3(C(Z), \mathbf{Z}),$$ by Lemma \ref{0.2}, we have $$(q^*\alpha_1\cdot q^*\beta_1\cdot p^*l')_{C(Z)}=(\alpha_4\cdot\beta_4\cdot l')_{F_2(Z)}=(\Psi(\alpha)\cdot \Psi(\beta)\cdot l')_{F_2(Z)}.$$

By Lemma \ref{0.1} and Claim 1, we just need to show that $N=30$.
In this case, $M(x, y)=(x+2y)(2x+2y)(2x+y)=2x^3+7x^2y+7xy^2+2y^3$. Therefore $N=d!(\alpha_{n-d-1}-\alpha_{n-d-3})=3!(7-2)=30$.
\end{proofth}
\subsection{Proof of the Theorem 4}
Let $U\subset \mathbf{P}(H^0(\mathbf{P}(V), \cO(3)))$ be the open subset of smooth cubic fivefold which contains $U_0$ as an open subset. There is the universal family of cubics $pr: \cZ\rightarrow U$. We consider the monodromy action
$$\rho: \pi_1(U, 0)\rightarrow \Aut(H^5(Z, \mathbf{Q}), \fI),$$ where $\fI$ is the intersection form on $H^5(Z, \mathbf{Q})$. It is known that the Zariski closure of the image of $\rho$ is the full group $\Sp(H^5(Z, \mathbf{Q}), \fI)$ (see, for instance, \cite{PS}). For any $d$, we denote by $\Hdg^{2d}(JZ):=H^{d,d}(JZ)\cap H^{2d}(JZ, \mathbf{Q})$ the group of Hodge classes of $JZ$. As a corollary of the big monodromy group, we have
 \begin{lemm}\label{6.4.1}$\Hdg^{2d}(JZ)=\mathbf{Q}\langle\Theta^d\rangle$ for all $0\leq d\leq 21$.
\end{lemm}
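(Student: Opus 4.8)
The plan is to transport everything to the abelian variety $JZ$ and combine the big-monodromy input already recalled from \cite{PS} with the classical invariant theory of the symplectic group. First I would set up the Hodge-theoretic dictionary: $JZ$ is a $21$-dimensional principally polarized abelian variety with $H^1(JZ,\mathbf{Q})\cong H^5(Z,\mathbf{Q})$ as polarized Hodge structures (up to a Tate twist), the polarization on $H^1$ being the intersection form $\fI$ on $H^5(Z,\mathbf{Q})$; consequently $H^{2d}(JZ,\mathbf{Q})\cong\wedge^{2d}H^5(Z,\mathbf{Q})$ and the class $\Theta\in H^2(JZ,\mathbf{Q})$ corresponds to the symplectic form attached to $\fI$. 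In particular every $\Theta^d$ is a Hodge class, so $\mathbf{Q}\langle\Theta^d\rangle\subseteq\Hdg^{2d}(JZ)$ is automatic and only the reverse inclusion requires work.

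Next I would reduce to monodromy invariants. Over the base $U$ of smooth cubic fivefolds we have the variation of Hodge structure $\mathcal{H}:=R^5pr_*\mathbf{Q}$ and the induced family of intermediate Jacobians, with monodromy representation $\rho:\pi_1(U,0)\to\Aut(H^5(Z,\mathbf{Q}),\fI)$. Arguing as in the Noether--Lefschetz discussion of \cite[Chapter 5]{V}, I would show that for $Z$ outside a countable union of proper closed analytic subsets of $U$ one has $\Hdg^{2d}(JZ)\subseteq\big(\wedge^{2d}H^5(Z,\mathbf{Q})\big)^{\rho(\pi_1(U,0))}$: the Hodge locus of a flat multivalued section of $\wedge^{2d}\mathcal{H}$ is a countable union of closed analytic subsets of $U$, and any such component that equals all of $U$ forces the section to be flat, hence $\pi_1$-invariant, while the proper components are removed when passing to the very general $Z$.

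Then I would compute the invariant subspace. Being fixed by a given linear automorphism is a Zariski-closed condition, so $\big(\wedge^{2d}H^5(Z,\mathbf{Q})\big)^{\rho(\pi_1(U,0))}=\big(\wedge^{2d}H^5(Z,\mathbf{Q})\big)^{G}$, where $G$ is the Zariski closure of the monodromy; by the hypothesis recalled from \cite{PS}, $G=\Sp(H^5(Z,\mathbf{Q}),\fI)$. The first fundamental theorem of invariant theory for the symplectic group says that the $\Sp(H^5(Z,\mathbf{Q}),\fI)$-invariants in $\wedge^{\bullet}H^5(Z,\mathbf{Q})$ form the polynomial algebra generated by the symplectic form; in the graded piece $\wedge^{2d}H^5(Z,\mathbf{Q})$ this invariant subspace is one-dimensional, spanned by $\Theta^d$, as long as $2d\leq\dim H^5(Z,\mathbf{Q})=42$, i.e. $d\leq 21$, and $\Theta^d\neq 0$ in that range because $\Theta$ polarizes the $21$-dimensional abelian variety $JZ$. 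Combining with the previous step gives $\Hdg^{2d}(JZ)\subseteq\mathbf{Q}\langle\Theta^d\rangle$, hence equality.

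I do not expect a genuine obstacle here: the Hodge dictionary and the fact that $\Theta^d$ is a Hodge class are immediate, and the big monodromy is granted. The only points needing care are making the "very general" reduction precise — identifying the Hodge locus as a countable union of closed subsets and observing that the relevant flat sections of $\wedge^{2d}\mathcal{H}$ are exactly the monodromy-invariant classes — and quoting the symplectic invariant theory in the graded form used above, paying attention to the range $d\leq 21$ in which the claimed generator is nonzero.
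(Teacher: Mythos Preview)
Your proposal is correct and follows essentially the same route as the paper: reduce Hodge classes on $JZ$ to monodromy-invariant classes in $\wedge^{2d}H^1(JZ,\mathbf{Q})\cong\wedge^{2d}H^5(Z,\mathbf{Q})$, invoke the big monodromy result from \cite{PS} to pass to $\Sp$-invariants, and then use symplectic invariant theory to conclude that this space is $\mathbf{Q}\langle\Theta^d\rangle$. The paper's proof is a three-line sketch that leaves the Hodge-locus reduction implicit, whereas you spell out that step; otherwise the arguments coincide.
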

\begin{proof}We consider the relative intermediate Jacobian $pr: \cJ\rightarrow U$.
Then the Zariski closure of the monodromy action $\pi(U, 0)\rightarrow H^1(JZ, \mathbf{Q})$ is again the full sympletic group. The subspace of invariants in $\wedge^{2d}H^1(JZ, \mathbf{Q})$, with respect to the full sympletic group, is $1$-dimensional and is spanned by $\mathbf{Q}\langle\Theta^d\rangle$.
\end{proof}
\begin{theo}\label{6.4.2}Let $Z$ be a general cubic fivefold. We consider the Abel-Jacobi map $\alpha: F_2(Z)\rightarrow JZ$. Then the cohomology class $[\alpha_*(F_2(Z))]=12[\frac{\Theta^{19}}{19!}]$.
\end{theo}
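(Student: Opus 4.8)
The plan is to combine three inputs: the Noether--Lefschetz statement $\rho(F_2(Z))=1$ proved in the body of the paper, the intersection formula of Proposition \ref{0.3}, and one Chern-class computation on a Grassmannian. Since $\delta(6,(3),2)=2$ and $\delta_{-}(6,(3),2)=1$, Theorem 2 shows that $F_2(Z)$ is a smooth connected surface, and the Abel--Jacobi map $\alpha\colon F_2(Z)\to JZ$ takes it into a $21$-dimensional principally polarized abelian variety. The cycle class $\alpha_*[F_2(Z)]$ lies in $H^{38}(JZ,\mathbf{Q})$ and is the class of an algebraic cycle, hence a Hodge class, so Lemma \ref{6.4.1} forces $\alpha_*[F_2(Z)]=c\,[\Theta^{19}/19!]$ for a unique $c\in\mathbf{Q}$; the whole point is to show $c=12$.

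First I would determine $c$ by a numerical pairing. Pairing with $\Theta^2$, using $\int_{JZ}\Theta^{21}=21!$ and the projection formula, gives $420\,c=\int_{F_2(Z)}(\alpha^*\Theta)^2$, so it remains to evaluate this self-intersection. Here the main theorem enters: for $Z$ very general the cubic fivefold is not among the exceptions, so by Theorem \ref{dim=2} one has $\rho(F_2(Z))=1$, namely $\mathrm{NS}(F_2(Z))_{\mathbf{Q}}=\mathbf{Q}\,l'$ with $l'$ the Pl\"ucker class; hence $\alpha^*\Theta=\mu\,l'$ for some $\mu\in\mathbf{Q}$, and $\int_{F_2(Z)}(\alpha^*\Theta)^2=\big(\int_{F_2(Z)}\alpha^*\Theta\cdot l'\big)^2 / \int_{F_2(Z)}(l')^2$. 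This reduces the problem to two intersection numbers on the surface $F_2(Z)$.

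The numerator is computed by Proposition \ref{0.3}. The Abel--Jacobi map identifies $H^1(JZ,\mathbf{Z})$ with $H^5(Z,\mathbf{Z})$ as polarized Hodge structures; under this identification $\alpha^*$ on $H^1$ becomes the map $\Psi$ of Section \ref{se2}, and $[\Theta]\in\wedge^2 H^1(JZ)$ becomes $\pm\sum_{i=1}^{21}e_i\wedge f_i$ for a symplectic basis $\{e_i,f_i\}$ of $H^5(Z,\mathbf{Z})$. Thus $\alpha^*\Theta=\pm\sum_{i=1}^{21}\Psi(e_i)\cup\Psi(f_i)$ in $H^2(F_2(Z),\mathbf{Q})$, and Proposition \ref{0.3} gives $\int_{F_2(Z)}\alpha^*\Theta\cdot l'=\pm\sum_{i=1}^{21}180\,(e_i\cdot f_i)_Z=\pm 3780$, the sign being irrelevant after squaring. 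For the denominator, $F_2(Z)$ is the zero locus of a general section of $\Sym^3\Sigma^*$ on $G(3,7)$, so $\int_{F_2(Z)}(l')^2=\deg F_2(Z)=\int_{G(3,7)}c_{10}(\Sym^3\Sigma^*)\cdot c_1(\Sigma^*)^2$, which a routine splitting-principle and Schubert-calculus computation evaluates to $2835$. Putting it together, $c=\frac{3780^{2}}{420\cdot 2835}=12$; and since $\alpha_*[F_2(Z)]$ and $\Theta^{19}/19!$ are both flat sections of the relevant local system over the space of smooth cubic fivefolds, the identity propagates from very general to every general $Z$.

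The conceptual steps are essentially forced once Proposition \ref{0.3} and the Noether--Lefschetz statement are available. The one place demanding care is the identification used in the third paragraph---matching $\alpha^*|_{H^1}$ with $\Psi$, matching $[\Theta]$ with the intersection form of $Z$, and verifying that the hypotheses of Proposition \ref{0.3} (planes covering a divisor, which here reads $3\cdot 6-4-\binom{5}{2}=4=6-2$) genuinely hold---while the only truly computational point, the value $\deg F_2(Z)=2835$, is a standard Grassmannian calculation.
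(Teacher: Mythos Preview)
Your argument is correct and is essentially the paper's own proof, merely repackaged: the paper introduces two unknowns by writing $l'\equiv x\,\alpha^*\Theta$ and $[\alpha_*F_2(Z)]=y\,[\Theta^{19}/19!]$, then uses Proposition~\ref{0.3} to obtain $xy=9$ and the value $(l')^2_{F_2(Z)}=2835$ (quoted from \cite{IM}) to obtain $x^2y=27/4$, whence $y=12$. Your single quotient formula $c=(\int\alpha^*\Theta\cdot l')^2/(420\cdot\int(l')^2)$ and your symplectic-basis evaluation $\int\alpha^*\Theta\cdot l'=3780$ encode exactly the same two relations, so the proofs coincide.
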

\begin{proof}We may assume that $Z$ is very general. By Theorem 3 in the introduction, we know that $\rho(F_2(Z))=1$. Hence we have $l'\equiv x\alpha^*\Theta$ for some rational number $x$. By Lemma \ref{6.4.1}, we may also write $[\alpha_*(F_2(Z))]=y[\frac{\Theta^{19}}{19!}]$ for some integer $y$. For any $\alpha, \beta\in H^1(JZ, \mathbf{Z})=H^5(Z, \mathbf{Z})$, we have
\begin{eqnarray*}
\big(\alpha\cdot\beta\cdot \frac{1}{20!}\bigwedge^{20}[\Theta]\big)_{JZ}&=&
\big(\alpha\cdot\beta\big)_Z\\&=&\frac{1}{180}\big(\Psi(\alpha)\cdot\Psi(\beta)\cdot l'\big)_{F_2(Z)}\\
&=&\frac{1}{180}xy\big(\alpha\cdot\beta\cdot \frac{1}{19!}\bigwedge^{20}[\Theta]\big)_{JZ},
\end{eqnarray*}
where the first equality holds by the definition of intermediate jacobian, the second equality holds because of Proposition \ref{0.3}, and the last equality holds by projection formula.
We have $xy=9$. On the other hand, we know from the Remark under Corollary 10 in \cite{IM} that $(l'^2)_{F_2(X)}=2835$. Hence $x^2y=\frac{2835}{21\cdot20}=\frac{27}{4}$. We deduce that $x=\frac{3}{4}$ and $y=12$.
\end{proof}

\begin{rema}\upshape It is not difficult to prove that for any smooth cubic fivefold $Z$, the variety of plane $F_2(Z)$ is always of dimension $2$. Moreover, if $F_2(Z)$ is smooth, the Abel-Jacobi map $\alpha$ is generically injective. Hence $JZ$ has a subvariety of dimension $2$ whose cohomology class is $12[\frac{\Theta^{19}}{19!}]$
\end{rema}

\end{document}